\numberwithin{equation}{section}
\newtheorem{theorem}{Theorem}[section]
\newtheorem{proposition}[theorem]{Proposition}
\newtheorem{lemma}[theorem]{Lemma}
\newtheorem{remark}[theorem]{Remark}
\newtheorem{example}[theorem]{Example}
\newtheorem{corollary}[theorem]{Corollary}
\newtheorem{definition}[theorem]{Definition}
\newcommand{\R}{\mathbb R}
\newcommand{\bt}{\begin{theorem}}
\newcommand{\et}{\end{theorem}}
\newcommand{\bl}{\begin{lemma}}
\newcommand{\el}{\end{lemma}}
\newcommand{\bd}{\begin{definition}}
\newcommand{\ed}{\end{definition}}
\newcommand{\bc}{\begin{corollary}}
\newcommand{\ec}{\end{corollary}}
\newcommand{\bp}{\begin{proof}}
\newcommand{\ep}{\end{proof}}
\newcommand{\bx}{\begin{example}}
\newcommand{\ex}{\end{example}}
\newcommand{\bi}{\begin{exercise}}
\newcommand{\ei}{\end{exercise}}
\newcommand{\bo}{\begin{proposition}}
\newcommand{\eo}{\end{proposition}}
\newcommand{\br}{\begin{remark}}
\newcommand{\er}{\end{remark}}
\newcommand{\be}{\begin{equation}}
\newcommand{\ee}{\end{equation}}
\newcommand{\ba}{\begin{align}}
\newcommand{\ea}{\end{align}}
\newcommand{\bn}{\begin{enumerate}}
\newcommand{\en}{\end{enumerate}}
\newcommand{\bg}{\begin{align*}}
\newcommand{\bcs}{\begin{cases}}
\newcommand{\ecs}{\end{cases}}
\def\R{\mathbb R}
\def\qed{\hfill$\square$\smallskip}
\def\@makefnmark{}
\newcommand{\bean}{\begin{eqnarray*}}
\newcommand{\eean}{\end{eqnarray*}}
\renewcommand{\epsilon}{\varepsilon}
\newcommand{\eqnn}[1]{\begin{equation}\begin{split}#1\end{split}\nonumber\end{equation}}
\newcommand{\beq}{\begin{equation}}
\newcommand{\eeq}{\end{equation}}
\newcommand{\bea}{\begin{eqnarray}}
\newcommand{\eea}{\end{eqnarray}}
\newcommand{\eq}[2]{\begin{equation}\begin{split}#1\end{split}\label{#2}\end{equation}}
\newcommand{\blue}[1]{{\color{blue}{#1}}}
\newcommand\LT{L^2(\mathbb{R})}
\newcommand\HT{H^1(\mathbb{R})}
\title[Orbital stability of smooth solitary waves]{Orbital stability of smooth solitary waves for the modified Camassa-Holm equation}
\author[X. J. Deng]{Xijun Deng}
\author[S. Lafortune]{St\'{e}phane Lafortune}
\author[Z. S. Liu]{Zhisu Liu}
\address[X. J. Deng]{\newline\indent School of Mathematics,
\newline\indent
Hubei University of Automotive Technology,
Shiyan, Hubei, 442002, P. R. China}
\email{\href{mailto:xijundeng@yeah.net}{xijundeng@yeah.net}}
\address[S. Lafortune]{\newline\indent Department of Mathematics,
%\newline\indent
\newline\indent
College of Charleston,
 Charleston, SC 29401, USA}
\email{\href{mailto:lafortunes@cofc.edu}{lafortunes@cofc.edu}}
\address[Z. S. Liu]{\newline\indent  {{School of Mathematics and Physics,}}
\newline\indent
China University of Geosciences,
Wuhan, Hubei, 430074, P. R. China}
\email{\href{mailto:liuzhisu@cug.edu.cn}{liuzhisu@cug.edu.cn}}
\subjclass[2010]{}
\thanks{Corresponding author: S. Lafortune ({\tt lafortunes@cofc.edu})}
\keywords{Modified Camassa-Holm equation; Smooth solitary waves; Orbital stability; Vakhitov-Kolokolov condition}
\begin{document}

\begin{abstract}
In this paper, we explore the orbital stability of smooth solitary wave solutions to
the modified Camassa-Holm equation with cubic nonlinearity. These solutions, which exist
 on a nonzero constant background $k$, are unique up to translation for each permissible value
 of $k$ and wave speed. By leveraging the Hamiltonian nature of the modified Camassa-Holm equation
 and employing three conserved functionals-comprising an energy and two Casimirs, we establish orbital
 stability through an analysis of the Vakhitov-Kolokolov condition.
This stability pertains to perturbations of the momentum variable in $H^1(\mathbb{R})$.
\end{abstract}
\maketitle

	\maketitle

\section{Introduction}
\label{sec1}

\label{1s}
In this paper, we are concerned with the stability of smooth solitary waves for the following modified Camassa-Holm (mCH)
equation with cubic nonlinearity
\begin{equation} \label{1} m_t+((u^2-u_x^2)m)_x=0, \;\; m=u-u_{xx},
\end{equation} where $u(t,x)$
is a real-valued function of space-time variables $(x, t)$, and the subscripts $x$ and $t$ appended to $m$ and $u$ denote partial
differentiation. The variable $m$ is deemed the ``momentum variable'' in the realm of peakon equations. The mCH equation \eqref{1}
was proposed as an integrable modified version
of the Camassa-Holm equation (CH)
\begin{equation} \label{2} m_t+(um)_x+u_xm=0, \;\; m=u-u_{xx}.
\end{equation}
It was derived using a general approach of the tri-Hamiltonian duality by Fuchssteiner \cite{FU96} and Olver
and Rosenau \cite{OR96}. Also, Equation \eqref{1} is integrable in the sense that it has a bi-Hamiltonian
structure \cite{FU96, OR96, Qiao06} and it admits a Lax pair \cite{Qiao06}. Later, the generalization of \eqref{1}
with a dispersive term given below in \eqref{3} was obtained by Qiao \cite{Qiao11} from the two-dimensional Euler
equations with the variable $u$ representing the velocity of the fluid. Qiao also obtained the bi-Hamiltonian structure
together with the Lax pair. Furthermore,
it is shown in \cite{GL13} that Equation \eqref{3} arises from an intrinsic invariant planar curve flow in Euclidean geometry.

Equation \eqref{1} admits a variety of solutions on the whole line, for both the cases of zero
 and nonzero backgrounds. More specifically, \eqref{1} admits peaked solitary waves (peakons) \cite{GL13},
 which are asymptotically going to zero, whereas the smooth multisoliton solutions \cite{IL12,M13} tend to a nonzero
 constant as $x \to \pm \infty$. Orbital stability of the single peakons for the mCH equation \eqref{1} was obtained
 in \cite{QLL13} using an approach similar to what was done in \cite{CS00, LL09} for the Camassa-Holm and Degasperis-Procesi
 one-peakon solutions. As for the train of peakons of the mCH equation \eqref{1}, their orbital stability was obtained in
 \cite{LLQ14} by using an energy argument and combining the method of the orbital
stability of a single peakon with the monotonicity of the local energy norm \cite{MMT02, KM20}.

In \cite{LLZ24}, Li, Liu, and Zhu investigated the stability of smooth solitary-wave solutions for the mCH equation with a linear dispersion term, given by
\begin{equation}\label{3}
m_t + ((u^2 - u_x^2)m)_x + \gamma u_x = 0,\;\;\; m = u - u_{xx},
\end{equation} where $\gamma$ is a positive constant. Unlike the mCH equation \eqref{1}, which does not admit smooth traveling wave solutions with vanishing boundary conditions, Equation \eqref{3} admits smooth soliton solutions that vanish at infinity \cite{M14, LL21}. By constructing conserved quantities in terms of the momentum variable $m$, it is shown in \cite{LLZ24} that the smooth soliton for Equation \eqref{3} is orbitally stable to perturbations to $m$ in $\HT$. As discussed in detail in \cite[Section 1]{LLZ24}, it would be challenging to apply the framework developed by Grillakis, Shatah, and Strauss in \cite{GSS87} or the methods used for the Camassa-Holm equation in \cite{CS02}. This is due to the cubic nonlinearity of the mCH equation, which causes the integrands of the conserved functionals to be quartic. Similar difficulties arise in establishing the orbital stability of smooth solitons of the Degasperis-Procesi equation (see the discussions in the introductions of \cite{LLW20, LLW22, LLW23}). Therefore, an approach following entirely the method outlined in \cite{GSS87} based on the Hamiltonian structure in the variable $u$ with invertible Hamiltonian operator does not seem to be the best of choices for the mCH equation \eqref{1}.

Inspired by the works in \cite{HL13, LP22, EJL24}, we aim to study the stability of smooth solitary wave solutions of the mCH equation \eqref{1} using a Hamiltonian structure, written in the variable $m$, that admits {{Casimirs$^1$}} \footnote{$^1$Casimirs are conserved functionals that have a zero Poisson bracket with all other functionals, including the Hamiltonian itself}. Our general approach is as follows. First, we establish the existence of smooth solitary waves for \eqref{1} under nonzero boundary conditions. We then demonstrate that such waves can be considered as critical points of an appropriate action functional expressed in the variable $m$, constructed out of a linear combination of the Hamiltonian and the Casimirs, similarly as done for the $b$-family in \cite{HL13, LP22, LL24} and for the Novikov equation in \cite{EJL24} (with the Casimirs for the Novikov equation given in \cite[Section 3]{Hone08}). Note that the $b$-family is a peakon equation that has a free parameter denoted by $b$ \cite{dhh, Dullin}, which includes two integrable cases: the Camassa--Holm equation \cite{CH93} ($b=2$) and the Degasperis--Procesi equation \cite{dp} ($b=3$). Novikov proposed the integrable Novikov peakon equation in \cite{Novikov}, and it is regarded as a generalization of the Camassa-Holm equation that accounts for cubic nonlinearities.

Next, we aim to determine conditions that ensure such solitary waves are constrained local extrema of the Hessian of the associated action functional. This strategy is in line with the Energy-Casimir method \cite{ecmethod}, except that in the next step, one finds that the Hessian operator has negative spectrum.
Nevertheless, after establishing the spectral properties of the Hessian operator, we show that these waves are orbitally stable in the constrained space, provided that a so-called Vakhitov-Kolokolov condition \cite{VK73} is satisfied. Finally, we verify that the Vakhitov-Kolokolov condition always holds, indicating that the smooth solitary wave solutions of the mCH equation \eqref{1} are orbitally stable, a consequence of the general stability result established in \cite{GSS87}.

Notice that there exist no smooth solitary waves for the mCH equation \eqref{1} with vanishing boundary condition. Therefore, we consider the solutions of \eqref{1} on a nonzero constant background with $m(t,x)\rightarrow k$ as $x\rightarrow \pm \infty$. Moreover, for fixed $k>0$, we consider the class of functions in the set
\begin{equation}\label{4} X_k:=\{m-k\in H^1(\mathbb{R}):m(x)>0,\,x\in \mathbb{R}\}. \end{equation}
The three conserved integrals we are using to construct the Lyapunov functional are given by
\begin{equation}\label{5} F_1(m)=\int_{\mathbb{R}}(m-k)\,dx \end{equation}
\begin{equation}\label{6} F_2(m)=\int_{\mathbb{R}}\left(\frac{1}{m}-\frac{1}{k}\right)\,dx \end{equation}
and
\begin{equation}\label{7}
F_3(m)=\int_{\mathbb{R}}\left(\frac{m_x^2}{m^5}+\frac{1}{4m^3}-\frac{1}{4k^3}\right)\,dx.
 \end{equation}
The fact that $F_1$ is constant is evident. The conserved integrals $F_2$ and $F_3$ were derived in \cite{M13} using a B\"{a}cklund transformation. However, since it is not immediately apparent that $F_2$ and $F_3$ are conserved, we verify this fact explicitly  in {{Appendix \ref{AA}}}.

%By using the three conserved integrals $F_1$, $F_2$, and $F_3$, we can establish the main stability result presented in the following theorem.

The equation \eqref{1} can be written in {{Hamiltonian}} form with energy given by $F_1$ above as \cite[Section 2]{Olver2016}
\begin{equation}
m_t={\mathcal{J}}\frac{\delta F_1}{\delta m},\;\;{\mathcal{J}} \equiv \partial_x m \partial_x^{-1} m (\partial_x^2-1)^{-1}  \partial_x m \partial_x^{-1} m \partial_x.
\label{HM}
\end{equation}
In {{Appendix \ref{AA}}}, we show that $F_2$ and $F_3$ are Casimirs for the Hamiltonian system described above, thus providing an alternative proof that $F_2$ and $F_3$ are conserved quantities. Also, by doing so,
we show that the Lyapunov functional is obtained by taking the energy functional and a linear combination of the Casimirs, as done in \cite{HL13,LP22,EJL24} for smooth solutions to the $b$-family and the Novikov equations.

The main result of this article is given in the following theorem.

\begin{theorem}\label{th1.1}  For fixed $c>0$, and $k\in(\frac{\sqrt{c}}{3}, \frac{\sqrt{3c}}{3})$, there
exists a unique smooth solitary wave $m(t, x) =\mu (x - ct)$ of the mCH equation \eqref{1}. This {{solitary}} wave $\mu (x - ct)$ is orbitally stable in the space $X_k$ defined in \eqref{4}, namely, if for every $\varepsilon>0$ there exists $\delta=\delta(\varepsilon)>0$ such
that for every $m_0\in X_k$ satisfying $\|m_0-\mu(\cdot)\|_{{H^1}}<\delta$, there exists
a unique solution $m\in C^0(\mathbb{R}, X_k)$ of the mCH equation \eqref{1} with the initial datum
$m(0, \cdot) = m_0$ and the maximal existence time $T>0$ satisfying
$$
{{\inf_{r\in \mathbb{R}}\left\{\|m(t,\cdot)-\mu(\cdot+r)\|_{{H^1}}, t\in [0,T)\right\}<\varepsilon.}}
$$\\
\end{theorem}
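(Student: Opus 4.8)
The plan is to follow the Energy--Casimir program sketched in the introduction, reducing orbital stability to a spectral problem for the Hessian of an action functional and then to a scalar Vakhitov--Kolokolov (VK) slope condition. I begin with existence and uniqueness. Substituting $m=\mu(\xi)$, $u=\phi(\xi)$ with $\xi=x-ct$ into \eqref{1} and integrating once against the boundary data $\mu\to k$, $\phi\to k$, $\phi_\xi\to0$ yields $(\phi^2-\phi_\xi^2-c)\mu=k(k^2-c)$. Combining this with $\mu=\phi-\phi_{\xi\xi}$ lets one eliminate $\phi$ and reduce the profile problem to a planar system for $\mu$ possessing a first integral of the form $\mu_\xi^2=\mathcal P(\mu)$, where $\mathcal P$ is (after clearing denominators) a polynomial in $\mu$ whose coefficients depend on $(c,k)$. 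The constant state $\mu\equiv k$ is an equilibrium, and the imposed decay forces $\mathcal P$ to have a double root at $k$; requiring in addition a second positive root and $\mu>0$ along the orbit is exactly what the range $k\in(\sqrt c/3,\sqrt{3c}/3)$ (equivalently $c\in(3k^2,9k^2)$) encodes. In that range $k$ is a saddle, there is a unique homoclinic orbit, and this gives the smooth positive solitary wave $\mu$, unique up to translation.

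Next I set up the variational characterization. Using that $F_2,F_3$ are Casimirs for $\mathcal J$ (Section~\ref{2.2}), I form the action $\Lambda=F_1+aF_2+bF_3$ and choose $a=a(c,k)$, $b=b(c,k)$ so that the Euler--Lagrange equation $\Lambda'(\mu)=0$ coincides with the profile equation above; the first integral $\mu_\xi^2=\mathcal P(\mu)$ is then the conserved quantity of this one-dimensional Lagrangian. Differentiating the identity $\Lambda'(\mu(\cdot+r))=0$ in $r$ gives $\mathcal H\mu_\xi=0$, where $\mathcal H:=\Lambda''(\mu)$ is a self-adjoint Sturm--Liouville operator of the form $\mathcal H=-\partial_\xi(p\,\partial_\xi)+q$ with weight $p=2/\mu^5>0$. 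Since $\mu$ is a single-bump profile, $\mu_\xi$ has exactly one zero, so Sturm oscillation theory identifies $\mu_\xi$ as the second eigenstate: $\ker\mathcal H=\mathrm{span}\{\mu_\xi\}$ is simple, $\mathcal H$ has exactly one negative eigenvalue, and because $\mu\to k$ the essential spectrum is $[\sigma_0,\infty)$ with $\sigma_0>0$, giving a spectral gap.

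I then carry out the constrained-coercivity and VK step. Conservation of $F_1,F_2,F_3$ confines admissible perturbations to the level set of these functionals, i.e.\ to a subspace transverse to the span of their gradients at $\mu$. By the Grillakis--Shatah--Strauss counting argument, $\mathcal H$ is positive definite on this constrained subspace (modulo the translation direction $\mu_\xi$) precisely when the single negative direction is absorbed by the constraints, which amounts to a scalar VK condition: an appropriate conserved functional $\mathcal M$, conjugate to the speed, satisfies $d(c):=\tfrac{d}{dc}\mathcal M(\mu_c)$ of a definite sign along the family $\mu_c$ at fixed $k$. I would compute $d(c)$ by differentiating $\mathcal M$ through the explicit first integral $\mu_\xi^2=\mathcal P(\mu)$, reducing the verification to the sign of an elementary expression in $(c,k)$, and check that it holds for every $k\in(\sqrt c/3,\sqrt{3c}/3)$. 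Granted constrained coercivity, the standard Lyapunov argument---$\Lambda(m)-\Lambda(\mu)$ controls $\inf_{r}\|m-\mu(\cdot+r)\|_{\HT}^2$, $\Lambda$ is conserved, and the flow is continuous in $X_k$---yields orbital stability modulo translations, completing the proof.

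The main obstacle I anticipate is twofold. The delicate part of the spectral step is confirming that $\ker\mathcal H$ is \emph{exactly} $\mathrm{span}\{\mu_\xi\}$ with no extraneous kernel, and that there is precisely one negative eigenvalue, despite the non-standard weight $p=2/\mu^5$ and the quartic integrands forced by the cubic nonlinearity; this is where the Sturm--Liouville reduction must be executed carefully. The second sensitive point is the VK verification: although it reduces to a sign check, producing $d(c)$ in closed form via $\mathcal P$ and confirming its sign \emph{uniformly} over the whole admissible range $c\in(3k^2,9k^2)$ requires genuine control of the integrals defining $\mathcal M(\mu_c)$. Finally, transferring the constrained coercivity of $\mathcal H$ into a true $\HT$ stability estimate---while keeping the background $k$ and the positivity constraint $m>0$ intact---relies on local well-posedness and the conservation laws being compatible with the class $X_k$.
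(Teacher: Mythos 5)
Your outline reproduces the paper's architecture faithfully: phase-plane existence of the homoclinic profile, an action functional built from the energy $F_1$ and the Casimirs $F_2,F_3$, Sturm--Liouville analysis of the Hessian (one simple negative eigenvalue, simple kernel spanned by $\mu_\xi$, positive essential spectrum), reduction to a Vakhitov--Kolokolov condition, and a GSS-type conclusion. However, the two steps you defer are exactly where the content of the proof lies, and your formulation of the VK step would not go through as written. First, your constrained space is ill-posed: you constrain perturbations to be ``transverse to the span of the gradients'' of $F_1,F_2,F_3$, but these gradients, $\frac{\delta F_1}{\delta m}(\mu)=1$, $\frac{\delta F_2}{\delta m}(\mu)=-\mu^{-2}$, do not belong to $L^2(\mathbb{R})$, so none of the three functionals is individually Fr\'echet differentiable and the pairing $\langle 1,\widetilde{m}\rangle$ is not even defined for all $\widetilde{m}\in H^1(\mathbb{R})$. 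The paper repairs this by regrouping $\Lambda$ as in \eqref{OmFr}--\eqref{4.1} into two genuinely differentiable functionals $\mathcal{G}$ and $\mathcal{F}=F_2+k^{-2}F_1$, and then imposes the \emph{single} constraint $\langle\frac{\delta\mathcal{F}}{\delta m}(\mu),\widetilde{m}\rangle=0$ together with orthogonality to $\mu_\xi$, as in \eqref{SLdef}; one constraint is precisely what is needed to absorb the single negative eigenvalue.

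Second, your VK condition invokes ``a conserved functional $\mathcal{M}$ conjugate to the speed,'' but no such functional exists in this formulation: $F_1$ generates the flow, $F_2,F_3$ are Casimirs, and the speed $c$ enters only through the two multipliers $\omega_1(c,k),\omega_2(c,k)$ of \eqref{3.12}, so the textbook $d''(c)$ criterion has no direct meaning here. The paper instead inverts $\mathcal{L}$ on $\frac{\delta\mathcal{F}}{\delta m}(\mu)$ by differentiating the Euler--Lagrange equation in the parameters $a,E,c,k$, combining these via the scaling identity of Lemma \ref{le4.1}, and using the combination $k\mu_k-\mu$ because $\mu_k\to 1$ at infinity and does not decay; this yields $\mathcal{L}(k\mu_k-\mu)=\frac{4c}{(c-k^2)^2}\frac{\delta \mathcal{F}}{\delta m}(\mu)$ and hence the criterion $\frac{d}{dk}\bigl[k\mathcal{F}(\mu)\bigr]<0$ at \emph{fixed} $c$ --- a derivative in the background parameter $k$, not in the speed. (A $c$-derivative route at fixed $k$ can in fact be salvaged, but only because, by \eqref{3.12}, $(\partial_c\omega_1,\partial_c\omega_2)=\frac{2}{(c-k^2)^2}(k^{-2},1)$ is proportional to the coefficients of $\frac{\delta\mathcal{F}}{\delta m}$, so that $\mathcal{L}\mu_c\propto\frac{\delta\mathcal{F}}{\delta m}(\mu)$; this proportionality must be verified and is special to the correct constraint functional $\mathcal{F}$, which you never identify.) Finally, the sign check you postpone is the crux of the whole theorem: the paper computes $Q(\phi,k)$ in \eqref{4.9} in closed form via the substitutions $\phi=k+\beta\varphi$ and $t=\sqrt{1-\varphi}$, obtaining $\frac{d}{dk}Q(\phi,k)=\frac{-8c}{k(c-k^2)}\sqrt{\frac{c-3k^2}{c-k^2}}<0$ for all admissible $(c,k)$ (Lemma \ref{le4.4}). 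Without this explicit evaluation (or its analogue in your parameterization), stability is not established.
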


\begin{remark} \label{re1.0}
We can apply the global existence result from Proposition \ref{pro.2} below by imposing the additional conditions that {\( m_0 - k \in H^{2,1}(\mathbb{R}) \cap H^{1,2}(\mathbb{R})
\subset H^2(\mathbb{R}) \) }and that \( m_0 > 0 \), where {\( H^{p,s}(\mathbb{R}) \)} is defined in \eqref{Hps}.
Under these assumptions, we may replace \( T \) with \( \infty \) in Theorem \ref{th1.1}.
\end{remark}

\begin{remark} \label{re1.1}
Note that
$$
 \|m\|^2_{H^1}=\int_{\mathbb{R}}(m^2+m_x^2)dx=\int_{\mathbb{R}}(u^2+3u_x^2+3u_{xx}^2+u_{xxx}^2)\,dx
$$
which is equivalent to the $H^3(\mathbb{R})$ norm on $u$. {{Thus it follows from Theorem \ref{th1.1} that if $u(t, x) = \phi(x-ct)$ is the
traveling wave solution of Equation \eqref{1}, we can define the set given by
\eqnn{
Y_k:=\{u-k\in H^3(\mathbb{R}):u-u_{xx}>0, x\in \mathbb{R}\}.
}
Then, by Theorem \ref{th1.1}, such wave is orbitally stable in $Y_k$ in the $H^3(\mathbb{R})$ norm in the following sense. If $u_0\in Y_k$ and $\|u_0-\phi(\cdot)\|_{{H^3}}<\delta$ then
$$
\inf_{r\in \mathbb{R}}\left\{\|u(t,\cdot)-\phi(\cdot+r)\|_{H^3}, t\in [0,T)\right\}<\varepsilon.
$$}}
\end{remark}

\begin{remark} \label{re1.2}
 The existence and uniqueness of the smooth solitary wave in
Theorem \ref{th1.1} will be established in Lemma \ref{3.1}.
\end{remark}

\begin{remark} \label{re1.3}
 The solutions studied in \cite{LLZ24} for Equation \eqref{3} are on a zero background, and thus are not related to the solitary wave with non-zero background for the mCH equation \eqref{3} with $\gamma=0$ (i.e. equation \eqref{1}). Hence, our results  are not obtainable by simply taking the limit as $\gamma\to 0$.
 %Moreover, it also shows that Theorem \ref{th1.1} is completely different from the corresponding main result presented in \cite{LLZ24}.
 \end{remark}

\begin{remark} \label{re1.4}
Equation \eqref{3} possesses, among others, the following two conserved integrals \cite{M13}, which were used in \cite{LLZ24} to prove the orbital stability of the smooth solitary wave solutions when $\gamma \neq 0$: \eqnn{ E(m)=\int_{\mathbb{R}}\left(M-\sqrt{\frac{\gamma}{2}}\right)dx, \ \ F(m)=\int_{\mathbb{R}}\left(\frac{m_x^2}{2M^5}-\frac{1}{\gamma M}+\sqrt{2}\gamma^{-\frac{3}{2}} \right)dx, } where $M=\sqrt{m^2+\frac{\gamma}{2}}$. While the expression for $F$ above is similar to the function $F_3$ from \eqref{7} used in the case $\gamma=0$, the limit $\gamma \to 0$ cannot be performed on $F$. This suggests that $F_3$ is a conserved functional that does not arise as a reduction of a functional from the case $\gamma > 0$.
  \end{remark}

{{While the methodology used in this article for analyzing smooth solitary waves of the modified Camassa-Holm (mCH) equation closely parallels the approach in \cite{LP22} for the $b$-family, there are important distinctions to highlight. In \cite{LP22}, the existence of solutions is established by leveraging the fact that the traveling wave equation can be interpreted as the conservation of kinetic and potential energy for a Newtonian particle. In contrast, this approach was not applicable in our case, {{so}} we instead employ phase plane analysis to demonstrate the existence of solitary wave solutions.
Furthermore, \cite{LP22} presents a stability criterion, but it is only verified analytically for the two integrable cases{{--$b=2$ (Camassa-Holm) and $b=3$ (Degasperis-Processi)--}}and asymptotically for two limiting parameter values (the criterion was later verified in \cite{LL23}). In our work, we are able to verify the stability criterion analytically for all parameter values.
Additionally, we have included explanations not present in \cite{LP22}, such as a discussion on the smoothness of solutions with respect to various parameters (see Section~\ref{S:VK}), and a justification of the invertibility of the second variation of the Lyapunov functional (also in Section~\ref{S:VK}).}}

The remainder of this paper is organized as follows. In Section \ref{2s}, we provide a brief review of the well-posedness of the mCH equation \eqref{1}.  In Section \ref{3s}, we examine the existence and fundamental properties of smooth solitary wave solutions of the mCH equation \eqref{1}. Specifically, we show that these solitary waves can be viewed as critical points of an explicit action functional $\Lambda$, and we then provide analytic spectral properties of the operator $\mathcal{L}=\frac{\delta^2 \Lambda}{\delta m^2}$ evaluated at these solitary waves. Section \ref{4s} is devoted to the analysis of orbital stability. By demonstrating that the Vakhitov-Kolokolov condition is always satisfied, we then complete the proof of Theorem \ref{th1.1}. {{Appendix \ref{AA} is devoted to the explicit proof that the integrals defined in \eqref{6} and \eqref{7} are conserved over time. We achieve this in two ways: first, by directly computing their time derivatives, and second, by showing that they are Casimirs for the Hamiltonian operator $\mathcal{J}$ defined in \eqref{HM}.}}

\section{{Well-posedness}}

\label{2s}
{{In this section, we recall the well-posedness results for the Cauchy problem of the mCH equation \eqref{1}}

% \subsection {Well-posedness}
% \label{2.1s}
 We consider the Cauchy problem of the mCH equation on the real line, that is,
\begin{equation}\left\{\begin{array}{l}\label{2.1}
m_t+\left((u^2-u_x^2)m\right)_x=0, \,\,\,\,m=u-u_{xx},\\ \\
u(0,x)=u_0(x), x\in \mathbb{R}.
\end{array}\right.\end{equation}\\

The following local well-posedness result and the properties of solutions on the line were established in \cite{GL13}

\begin{proposition} \label{pro.1}
Let $u_0\in H^s(\mathbb{R})$ with $s>\frac{5}{2}$. Then there exists a time $T>0$ such that the initial value problem \eqref{2.1} has a unique solution $u\in C([0,T),H^s(\mathbb{R}))\cap C^1([0,T),H^{s-1}(\mathbb{R}))$.
Moreover, the solution $u$ depends continuously on the initial data, and if $m_0=(1-\partial_x^2)u_0$ does not change sign, then $m(t,x)$ will not change sign for any $t\in[0,T)$. More precisely, if $m_0(x)>0$, then the corresponding solution $u(t,x)$ is positive and satisfies $|u_x(t,x)|\leq u(t,x)$ for $(t,x)\in [0,T)\times \mathbb{R}$.
\end{proposition}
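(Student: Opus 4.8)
The statement is a standard local well-posedness result for a Camassa--Holm--type equation, so my plan is to recast \eqref{2.1} as a nonlocal quasilinear transport equation for $u$ and then run a Kato-type semigroup argument (or, equivalently, a Friedrichs-mollifier energy argument). First I would eliminate $m$ through the Helmholtz operator: writing $m=(1-\partial_x^2)u$ and inverting with the Green's function $p(x)=\tfrac12 e^{-|x|}$, Equation \eqref{1} takes the form
\begin{equation}
u_t+(u^2-u_x^2)\,u_x=N(u),\qquad N(u):=(u^2-u_x^2)u_x-p*\partial_x\!\left[(u^2-u_x^2)m\right],
\nonumber
\end{equation}
where $N(u)$ is built from convolutions of polynomials in $u$ and $u_x$ against $p$ and $p_x=-\tfrac12\,\mathrm{sgn}(x)\,e^{-|x|}$. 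The point of the reduction is that the top-order symbols in the two pieces of $N$ cancel exactly, so $N$ maps $H^s$ into $H^s$ without loss of derivatives; all the derivative loss is thereby concentrated in the transport term, whose characteristic speed is $u^2-u_x^2$.

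For local existence and uniqueness I would verify Kato's hypotheses for $\tfrac{du}{dt}+A(u)u=N(u)$ with $A(u)=(u^2-u_x^2)\partial_x$: that each $A(u)$ generates a $C_0$-semigroup, that $\Lambda^s=(1-\partial_x^2)^{s/2}$ conjugates $A(u)$ into a bounded perturbation of itself, and that $u\mapsto A(u)$ and $u\mapsto N(u)$ are Lipschitz on bounded subsets of $H^s$. Equivalently, I would regularize with a mollifier $J_\varepsilon$, establish the uniform a priori bound $\tfrac{d}{dt}\|u\|_{H^s}^2\lesssim P(\|u\|_{H^s})$ for a polynomial $P$, pass to the limit $\varepsilon\to0$, and obtain uniqueness together with continuous dependence by running the same estimate on the difference of two solutions in a norm one order lower. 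This produces the solution class $u\in C([0,T),H^s)\cap C^1([0,T),H^{s-1})$.

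I would read off the two qualitative conclusions from the conservation form of \eqref{1} rather than from the functional setting. Introduce the characteristics $q(t,x)$ solving $q_t=(u^2-u_x^2)(t,q)$ with $q(0,x)=x$; differentiating in $x$ gives $\partial_t q_x=(u^2-u_x^2)_x(t,q)\,q_x$, so $q_x(t,x)=\exp\!\big(\int_0^t(u^2-u_x^2)_x(\tau,q)\,d\tau\big)>0$, and combining this with the form $m_t+(u^2-u_x^2)m_x=-(u^2-u_x^2)_x\,m$ of the equation one finds $\tfrac{d}{dt}\big(m(t,q)\,q_x\big)=0$. Hence $m(t,q(t,x))\,q_x(t,x)=m_0(x)$, and since $q_x>0$ the sign of $m$ is preserved on $[0,T)$. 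For the pointwise bound, the representation $u=p*m$ yields the explicit identities
\begin{equation}
(u-u_x)(x)=e^{-x}\!\int_{-\infty}^{x}\!e^{y}m(y)\,dy,\qquad (u+u_x)(x)=e^{x}\!\int_{x}^{\infty}\!e^{-y}m(y)\,dy,
\nonumber
\end{equation}
so when $m>0$ both right-hand sides are positive, which is precisely $u>0$ and $|u_x|\le u$.

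The main obstacle is the top-order a priori estimate for the transport term: because the coefficient $u^2-u_x^2$ itself carries a top-order factor, the pairing $\langle\Lambda^s\!\big((u^2-u_x^2)u_x\big),\Lambda^s u\rangle$ threatens to lose a derivative. The remedy is to peel off the commutator $[\Lambda^s,u^2-u_x^2]u_x$, control it by a Kato--Ponce estimate, and integrate by parts in the remaining piece to produce $-\tfrac12\langle(u^2-u_x^2)_x\,\Lambda^s u,\Lambda^s u\rangle$. This is exactly what forces the threshold $s>\tfrac52$: one needs $(u^2-u_x^2)_x\in L^\infty$, hence $u_{xx}\in L^\infty$, which holds precisely when $s-2>\tfrac12$. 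Verifying that the smoothing in $N(u)$ keeps every remaining contribution at order $s$ and that Kato's Lipschitz hypotheses hold uniformly on bounded sets is the technical heart of the argument, and it is carried out in detail in \cite{GL13}, whose proof I would follow.
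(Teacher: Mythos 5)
First, note what you are being compared against: the paper offers no proof of Proposition \ref{pro.1} at all --- it is stated as a result established in \cite{GL13} --- so the benchmark is the argument in that reference, which is indeed the Kato-type scheme you outline. Your treatment of the qualitative statements is correct and standard: $u^2-u_x^2$ is the characteristic speed for $m$, the conservation of $m(t,q)\,q_x$ along characteristics gives sign invariance, and the identities $(u-u_x)(x)=e^{-x}\int_{-\infty}^x e^y m\,dy$ and $(u+u_x)(x)=e^{x}\int_x^\infty e^{-y}m\,dy$ give $u>0$ and $|u_x|\le u$ when $m>0$.

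The gap is in your reduction to a transport equation: with the velocity $u^2-u_x^2$ the advertised cancellation does \emph{not} take place, and your $N(u)$ does not map $H^s$ into $H^s$. Indeed, writing $(u^2-u_x^2)m=u^3+uu_x^2+\partial_x\big(\tfrac13 u_x^3-u^2u_x\big)$ and using $p*\partial_x^2 f=p*f-f$ (with $p=\tfrac12 e^{-|x|}$), one finds
\[
N(u)=(u^2-u_x^2)u_x-p*\partial_x\!\big[(u^2-u_x^2)m\big]
=-\tfrac{2}{3}\,u_x^3-\partial_x p*\big(\tfrac23 u^3+uu_x^2\big)-\tfrac13\,p*\big(u_x^3\big).
\]
The two convolution terms are harmless (they gain one and two derivatives, respectively), but the local residue $-\tfrac23 u_x^3$ lies only in $H^{s-1}$: cubing does not upgrade $u_x\in H^{s-1}$ to $H^s$ (if it did, every $H^{s-1}$ function bounded below by a positive constant on an interval would be $H^s$ there, since $g=(g^3)^{1/3}$ and the cube root is smooth away from zero). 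Consequently Kato's hypothesis that the nonlinearity be bounded and Lipschitz on $H^s$ fails for the pair $A(u)=(u^2-u_x^2)\partial_x$, $N(u)$, and your plan stalls precisely at the step ``verify Kato's hypotheses.'' The exact cancellation you want occurs for the velocity $u^2-\tfrac13u_x^2$: the equation \eqref{2.1} is equivalent to
\[
u_t+\big(u^2-\tfrac13u_x^2\big)u_x+\partial_x p*\big(\tfrac23u^3+uu_x^2\big)+\tfrac13\,p*\big(u_x^3\big)=0,
\]
which is the quasilinear form on which \cite{GL13} actually runs Kato's theorem. The conceptual point you missed is that $u^2-u_x^2$ is the transport speed of the momentum $m$ (hence correct in your characteristics computation), while after inverting $1-\partial_x^2$ the velocity transporting $u$ itself acquires the factor $\tfrac13$. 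The error is repairable --- either switch to the $\tfrac13$ velocity, or, in the mollifier-energy variant, treat the leftover $u_x^3=(u_x^2)\,u_x$ as a second transport-type term (commutator plus integration by parts, using $u_xu_{xx}\in L^\infty$, again requiring $s>\tfrac52$) --- but as written the claim of exact top-order cancellation is false, and it is the linchpin of your existence argument.
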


In this article, we assume that the well-posedness result above can be extended
to the solutions with nonzero background of \eqref{1}. In other words, we
make the assumption that the tools used in \cite{GL13} to obtain Proposition \ref{pro.1} can
be extended to the equation obtained by making the substitution $u\rightarrow u + k$
into \eqref{1}.

Note that the following Proposition provides global existence of solutions on
a nonzero background \cite[Theorem 1.1]{YFL22}.

\begin{proposition} \label{pro.2}
Assume that the initial data $m(0) > 0$ and $m(0) -k \in H^{2,1}(\R)\cap  H^{1,2}(\R)$ for some $k>0$. Then there
exists a unique solution of the mCH \eqref{1} such that $m(t)-k \in C\left([0, +\infty);H^{2,1}(\R)\cap  H^{1,2}(\R)\right)$.
\end{proposition}
{\noindent}Here, by $H^{p,s}(\R)$, we mean \cite{YFL22}
\eq{
H^{p,s}(\R)=\left\{f\in\LT \,\vert\, (1+x^2)^{s/2}\partial_x^jf(x)\in \LT,\;j=0,1,...,p\right\}.
}{Hps}
As explained in Remark \ref{re1.0}, the proposition above can be used to reformulate
Theorem \ref{th1.1} into a \textquotedblleft  global stability\textquotedblright\ result.

\section {Smooth solitary wave solutions}
\label{3s}
In this section, we employ phase plane analysis to establish the existence of a one-parameter family of smooth solitary wave solutions for the mCH equation on a nonzero constant background. We then show that these solutions can be characterized variationally as the critical points of an action functional $\Lambda$. This functional is composed of a linear combination of the energy $F_1$ \eqref{5} and the Casimirs $F_2$ \eqref{6} and $F_3$ \eqref{7} {{(the functionals $F_2$ and $F_3$ are shown to be Casimirs in Appendix \ref{AA})}}. Finally, we derive the spectral properties of the Hessian operator $\mathcal{L}$ associated with the action functional $\Lambda$.

\subsection {Existence of smooth solitary wave solutions}
We consider the traveling wave solutions of \eqref{1} of  the form $u(t,x)=\phi(\xi)$, $\xi=x-ct$.  After integration, we find that  the profile $\phi$
satisfies the ODE
\begin{equation}\label{3.1}
(\phi-\phi_{\xi\xi})(\phi_\xi^2-\phi^2+c)=a,
\end{equation}
where $a$ is an integration constant.

The following lemma describes the family of solitary waves parameterized by the arbitrary nonzero background
parameter $k > 0$.

\begin{lemma} \label{le3.1}
 For fixed $c>0$, there exists a family of smooth solitary waves with $\phi\in C^{\infty}(\mathbb{R})$ satisfying $\phi(\xi)\rightarrow k$
as $|\xi|\rightarrow \infty$ if and only if $k\in(\frac{\sqrt{c}}{3}, \frac{\sqrt{3c}}{3})$. Moreover,
$$
k<\phi(\xi)\leq \sup_{\xi\in \mathbb{R}}{(\phi)}=\sqrt{2(c-k^2)}-k{{<\sqrt{c}}}, \ \ \mu=\phi-\phi_{\xi\xi}>0, \ \ \xi\in \mathbb{R}.
$$
{{The solution described above is unique and an even function of $\xi$
with the added condition $\phi'(0) = 0$.}}
\end{lemma}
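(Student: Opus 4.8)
The plan is to reduce the second-order profile equation \eqref{3.1} to a first-order phase-plane equation by constructing an explicit first integral, and then to read off existence, the admissible range of $k$, and the stated bounds from the resulting energy curve. First I would fix the integration constant using the asymptotics $\phi\to k$, $\phi_\xi,\phi_{\xi\xi}\to0$ as $|\xi|\to\infty$, which forces $a=k(c-k^2)$. Writing $\mu=\phi-\phi_{\xi\xi}$ and $g:=\phi_\xi^2-\phi^2+c$, equation \eqref{3.1} reads $\mu\,g=a$, so $\phi_{\xi\xi}=\phi-a/g$; differentiating $g$ and inserting this gives $g_\xi=2\phi_\xi(\phi_{\xi\xi}-\phi)=-2a\phi_\xi/g$, hence $(g^2)_\xi=-4a\phi_\xi$. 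Integrating and fixing the constant by $g\to c-k^2$ yields the first integral
\[
(\phi_\xi^2-\phi^2+c)^2=(c-k^2)\,(c+3k^2-4k\phi).
\]
Since $g=c-k^2>0$ at the background when $k<\sqrt c$, the relevant branch is the positive square root, giving $\phi_\xi^2=\phi^2-c+\sqrt{(c-k^2)(c+3k^2-4k\phi)}=:P(\phi)$.

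Next I would extract the turning points by setting $\phi_\xi=0$, i.e. $(c-\phi^2)^2=(c-k^2)(c+3k^2-4k\phi)$. Denoting the left-minus-right difference by $Q(\phi)$, a direct check shows $Q(k)=Q'(k)=0$, so $\phi=k$ is a double root and $Q$ factors as $Q(\phi)=(\phi-k)^2(\phi^2+2k\phi+3k^2-2c)$. The positive root of the quadratic factor is exactly $\phi_{\max}=\sqrt{2(c-k^2)}-k$, the claimed supremum, while the other root is negative and irrelevant to an elevation wave. To guarantee that $\phi=k$ is a saddle (necessary for a homoclinic loop), I would linearize the equivalent system $\phi_\xi=y$, $y_\xi=\phi-a/(y^2-\phi^2+c)$ at $(k,0)$ and compute the eigenvalues $\pm\sqrt{(c-3k^2)/(c-k^2)}$; reality and nonvanishing of these force $(c-3k^2)/(c-k^2)>0$, which together with $c-k^2>0$ yields the upper bound $k<\sqrt{3c}/3$.

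The lower bound and smoothness come from demanding that the momentum $\mu=a/g$ stay finite and positive along the orbit, i.e. that $g>0$ throughout. At the peak $\phi_\xi=0$ gives $g=c-\phi_{\max}^2$, so smoothness requires $\phi_{\max}<\sqrt c$, and a short computation shows this is equivalent to $k>\sqrt c/3$. On the interval $k\in(\sqrt c/3,\sqrt{3c}/3)$ one then checks $P(\phi)>0$ for $\phi\in(k,\phi_{\max})$ and $g>0$ on $[k,\phi_{\max}]$, so the unstable manifold of the saddle rises monotonically to the simple turning point $\phi_{\max}$ and, by the reflection symmetry $\xi\mapsto-\xi$, returns to $k$: this is the desired single-hump homoclinic orbit. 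Because $g$ never vanishes, $\mu=a/g>0$ and $\phi\in C^\infty$ by bootstrapping the smooth ODE, while monotonicity of the profile gives $k<\phi\le\phi_{\max}$. For the converse I would reverse this reasoning: if $k\ge\sqrt{3c}/3$ the equilibrium at $k$ is a center rather than a saddle and no homoclinic orbit to $k$ exists, whereas if $k\le\sqrt c/3$ one has $\phi_{\max}\ge\sqrt c$ and the orbit instead reaches the singular set $g=0$ (at $\phi_*=(c+3k^2)/(4k)$) with $\phi_\xi\ne0$, where $\mu$ blows up, producing only a peaked or cusped, non-smooth wave.

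The main obstacle I anticipate is the smoothness bookkeeping in the last step: one must prove that along the homoclinic orbit $g$ stays strictly positive and, in particular, that the turning point $\phi_{\max}$ is reached strictly before the singular value $\phi_*$ where $g$ would vanish. This amounts to checking the sign condition $c+3k^2-4k\phi>0$ simultaneously with $\phi<\sqrt c$ all along the branch, and to verifying that the positive square-root branch remains the correct one on the entire loop (that $g$ does not change sign). Pinning down the lower endpoint $k=\sqrt c/3$ as exactly the threshold where $\phi_{\max}=\sqrt c=\phi_*$, so that the smooth peak degenerates into a cusp, is the delicate point that fixes the left end of the admissible interval.
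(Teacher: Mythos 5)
Your proposal follows essentially the same route as the paper: the same first integral $(\phi_\xi^2-\phi^2+c)^2=(c-k^2)(c+3k^2-4k\phi)$ (the paper obtains it by multiplying \eqref{3.1} by $\phi_\xi$ and integrating; you integrate $(g^2)_\xi=-4a\phi_\xi$, which is the same computation), the same selection of the ``$+$'' square-root branch, the same turning points $\phi=-k\pm\sqrt{2(c-k^2)}$, and the same two conditions --- $\phi_{\max}>k$ (i.e.\ $c>3k^2$) and $\phi_{\max}<\sqrt{c}$ (i.e.\ $c<9k^2$) --- pinning down $k\in(\frac{\sqrt{c}}{3},\frac{\sqrt{3c}}{3})$. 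Your linearization at $(k,0)$ with eigenvalues $\pm\sqrt{(c-3k^2)/(c-k^2)}$ is a correct alternative derivation of the condition the paper phrases geometrically as $\phi_1>k$, and your quartic factorization $Q(\phi)=(\phi-k)^2(\phi^2+2k\phi+3k^2-2c)$ is correct. The existence direction, the bound $k<\phi\le\sqrt{2(c-k^2)}-k$, and the positivity $\mu=a/g>0$ are all handled as in the paper.

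The one genuine flaw is in your ``only if'' direction. You claim that for $k\ge\frac{\sqrt{3c}}{3}$ the equilibrium $(k,0)$ is a center, but by your own eigenvalue formula this is true only for $\frac{\sqrt{3c}}{3}<k<\sqrt{c}$: when $k>\sqrt{c}$ both $c-3k^2$ and $c-k^2$ are negative, the ratio is positive, and $(k,0)$ is again a \emph{saddle}, so the ``center, hence no homoclinic orbit'' argument says nothing there. (At $k=\frac{\sqrt{3c}}{3}$ exactly, the eigenvalues vanish and the linearization is degenerate, so ``center'' must come from the first integral, not the Jacobian; and at $k=\sqrt{c}$ the constant $a=k(c-k^2)$ vanishes and the profile equation degenerates, which needs a separate remark.) The regime $k\ge\sqrt{c}$ still has to be excluded, and the tool is already in your write-up: the quadratic factor $\phi^2+2k\phi+3k^2-2c$ has discriminant $8(c-k^2)<0$, so there is no real turning point other than the double root $\phi=k$, hence no closed level curve and no homoclinic loop. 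This is exactly how the paper disposes of that case, via \eqref{3.7}, which forces $c>k^2$ for the level curve $\Gamma$ to meet the $\phi$-axis away from $(k,0)$. So the gap is patchable with your own factorization, but as written the necessity argument has a hole for $k\ge\sqrt{c}$.
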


\begin{proof}
Equation \eqref{3.1} can be rewritten as the following two-dimensional dynamical system
\begin{equation}\left\{\begin{array}{l}\label{3.2}
 \phi_\xi=\psi, \\ \\
 (c+\psi^2-\phi^2)\psi_\xi=\phi(c+\psi^2-\phi^2)-a.
\end{array}\right.\end{equation}\\
Another invariant of \eqref{3.1} can be obtained by multiplying the equation by $\phi_\xi$, performing an integration, and use the definition of $\psi$ given in \eqref{3.2}, which yields
\begin{equation}\label{3.3}
(\phi^2-\psi^2)^2-2c(\phi^2-\psi^2)+4a\phi=E,
\end{equation}
where $E$ is another {{constant of integration}}.

A smooth solitary wave profile satisfying $\phi(\xi)\rightarrow k$ as $|\xi|\rightarrow \infty$ correspond to a homoclinic
orbit to the equilibrium point $(\phi,\psi) = (k, 0)$ of system \eqref{3.2}. Taking the limit
as $|\xi|\rightarrow \infty$ in \eqref{3.1} and \eqref{3.3} yields the relations
\begin{equation}\label{3.4}
 a=k(c-k^2), \ \ E=k^2(2c-3k^2).
\end{equation}

After substituting \eqref{3.4} into \eqref{3.3}, it is found that a homoclinic orbit to $(k, 0)$ corresponds a bounded connected component of the level curve, denoted by
\eqnn{
\Gamma: (\phi^2-\psi^2)^2-2c(\phi^2-\psi^2)+4k(c-k^2)\phi=k^2(2c-3k^2),
}
which is equivalent to
\begin{equation}\label{3.6}
\Gamma: (\phi^2-\psi^2-c)^2=(c-k^2)(c+3k^2-4k\phi).
\end{equation}
In order to have a homoclinic orbit to $(k, 0)$, the curve $\Gamma$ must intersect the $\phi$ axis at least one point different from $(k,0)$.
Setting $\psi=0$ and assuming that $\phi\neq k$, we find that the equation defining $\Gamma$ is equivalent to
\begin{equation}\label{3.7}
 (\phi+k)^2=2(c-k^2)\Rightarrow c>k^2, \ \ \phi_{1,2}=\pm \sqrt{2(c-k^2)}-k.
\end{equation}

On the other hand, it follows from \eqref{3.6} that
\begin{equation}\label{3.8}
 \psi^2=\phi^2-c \pm\sqrt{(c-k^2)(c+3k^2-4k\phi)}.
\end{equation}
In view of the fact that $c>k^2$, in order for  \eqref{3.8} to admit $(k,0)$ as a solution, one must choose the ``$+$'' branch and consider the curve defined by
\begin{equation}\label{3.9}
 \psi^2=\phi^2-c +\sqrt{(c-k^2)(c+3k^2-4k\phi)}.
\end{equation}
The RHS of \eqref{3.9} has at most three zeros (not counting multiplicity): $\phi=k$ and, possibly, $\phi=\phi_i, i=1,2$. It inherits a zero at $\phi=\phi_i$ from the solutions of \eqref{3.7}, and its value is distinct from $\phi=k$, if and only if {$\phi_i^2 - c < 0$.}
As a consequence, Equation \eqref{3.9} does not admit $(\phi_{2},0)$ as a solution since a simple calculation shows that
$$\phi_2^2-c=c-k^2+2k\sqrt{2(c-k^2)}>0.
$$
The RHS of equation \eqref{3.9} thus has $\phi=k$ as a double zero and $\phi=\phi_1$ as a simple one if {$\phi_1^2<c$.}
Furthermore, for equation \eqref{3.9} to define a {{closed}} curve that includes the fixed point $(k,0)$ and $(\phi_1,0)$, one needs the RHS of the equation to be positive on the open interval between $k$ and $\phi_1$.  Since the RHS is positive for $\phi<0$, has a double zero at $\phi=k$, and a distinct simple zero at $\phi=\phi_1$ (if {$\phi_1^2<c$}), Equation \eqref{3.9} defines a closed curve in the $\phi-\psi$ plane if and only if $\phi=\phi_1$ is a zero of RHS and $\phi_1>k$. So the conditions for \eqref{3.9} to define a closed
curve with the points $(k,0)$ and $(\phi_1,0)$ in it are
\eq{
\phi_1^2-c<0\text{ and } \phi_1>k,
}{phi1cond}
which leads to the conditions $k^2\in \left(\frac{c}{9},\frac{c}{3}\right)$.

Thus, in that case, \eqref{3.9} defines the  homoclinic orbit $$\left\{(\phi, \psi):\psi^2=\phi^2-c +\sqrt{(c-k^2)(c+3k^2-4k\phi)}, \phi>k\right\}$$ and the equilibrium point $(k, 0)$.

Finally, it follows from \eqref{3.9} and the condition given in \eqref{phi1cond} on the maximum value $\phi_1$ of the wave that
\begin{equation}\label{3.10}
\phi^2-\phi_\xi^2-c=\phi^2-\psi^2-c<0.
\end{equation}
By \eqref{3.1}, we can derive that $\phi-\phi_{\xi\xi}>0$ since $a=k(c-k^2)>0$. The statement about the supremum of $\phi$ {{given}} in the lemma is a consequence of the formula for $\phi_1$ made in \eqref{3.7}.

{{Since the dynamical system \eqref{3.2} is autonomous, we can ensure the solution is unique by the choice $\psi(0)=0$. Since that same dynamical system enjoys the symmetry $(\phi,\psi,\xi)\to (\phi,-\psi,-\xi)$, the unique solution is an even function of $\xi$. In fact, any solution for which $\psi(0)=0$ will be even due to the presence of that symmetry.}}
\end{proof}

\begin{remark} \label{re3.1}
 In the limiting case $k\rightarrow \frac{\sqrt{3c}}{3}$, the level curve $\Gamma$ will intersect the $\phi$ axis at the saddle point $(k,0)$. This implies that there exists no smooth solitary wave.
 In the limiting case $k\rightarrow \frac{\sqrt{c}}{3}$, $\phi^2-\phi_\xi^2-c$ approaches zero at the location of the maximum value of the solitary wave since $\phi_1\to c^-$.  It then follows from \eqref{3.1} that $\phi_{\xi\xi}\to -\infty$ at that same location.
 Thus, the solitary wave is no longer smooth.
\end{remark}

\begin{remark} \label{re3.2}
Let $\phi:=k+(1-\partial_x^2)^{-1}(\mu-k)$, then the following relation between $\mu$
and $\phi$ is obtained from \eqref{3.1}, \eqref{3.4}, and \eqref{3.9}:
\begin{equation}\label{3.11}
 \mu=k\sqrt{\frac{c-k^2}{c+3k^2-4k\phi}}.
\end{equation}

\end{remark}

\begin{remark} \label{re3.3}
Note that $k<\phi(\xi)\leq \phi_1=\sqrt{2(c-k^2)}-k$ (see \eqref{3.7}). It follows from \eqref{3.11} that
$$
k<\mu(\xi)\leq M_{c,k},
$$
where $M_{c,k}=\frac{k\sqrt{c-k^2}}{2\sqrt{2}k-\sqrt{c-k^2}}$. Hence, $M_{c,k}\rightarrow \infty$ as $k\rightarrow \frac{\sqrt{c}}{3}$.

\end{remark}

\begin{figure}[htb!]
\centering
\includegraphics[width=0.8\textwidth]{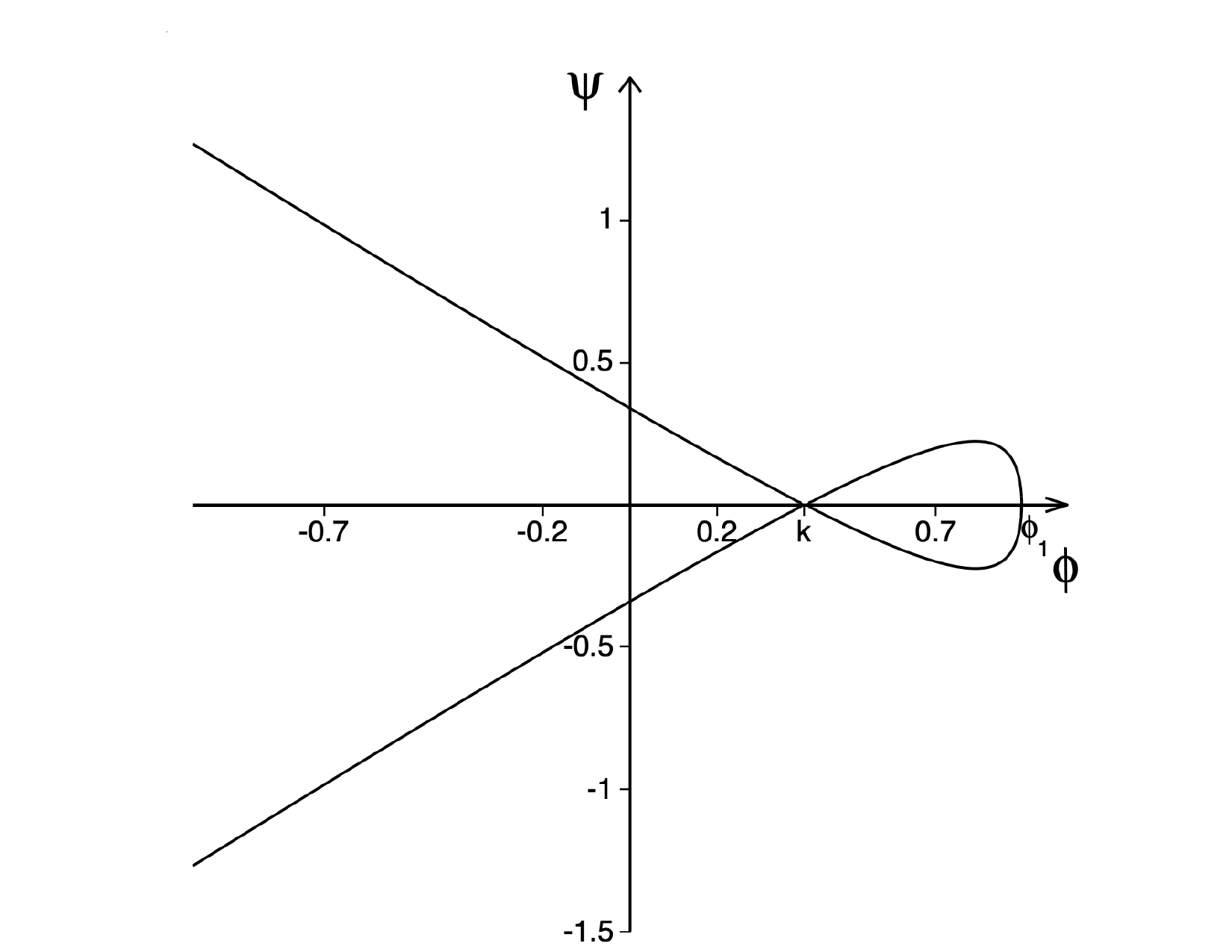}
\caption{Graph of the curve defined by Equation \eqref{3.9} in the case $k=0.4$ and $c=1$, corresponding to $\phi_1=0.896148140$.} \label{fig1}
\end{figure}

\subsection {Variational characterization}
We now show that the profile equations \eqref{3.1} and \eqref{3.3} satisfied by the smooth solitary waves
$\mu(.; k)$ correspond to the Euler-Lagrange equation of the action functional
\begin{equation}\label{3.12}
 \Lambda(m):=F_3(m)+\omega_1F_1(m)+\omega_2F_2(m), \ \ m\in X_k,
\end{equation}
where $m=u-u_{xx}$, and the conserved quantities $F_1, F_2$ and $F_3$ are given in \eqref{5}, \eqref{6} and \eqref{7}.
The following lemma states that the variational characterization is possible if
and only if the Lagrange multipliers $\omega_1$ and $\omega_2$ are uniquely related to parameters of Eqs.\eqref{3.1} and \eqref{3.3}.

\begin{lemma} \label{le3.2}
(Variational characterization) For any fixed $c > 0$ and $k\in(\frac{\sqrt{c}}{3}, \frac{\sqrt{3c}}{3})$, a critical point $\mu$ of
the action functional $\Lambda$ coincides with the solitary wave
solution $\mu= \phi-\phi_{\xi\xi}\in C^{\infty}(\mathbb{R})$, where $\phi$ is the unique positive solitary wave solution for the mCH equation \eqref{1},
if and only if
\begin{equation}\label{3.13}
\omega_1=\frac{c-9k^2}{4k^4(c-k^2)}, \quad \omega_2=-\frac{c+3k^2}{2k^2(c-k^2)}.
\end{equation}
\end{lemma}

\begin{proof}
Let $\mu\in X_k$ be a critical point of $\Lambda$. After straightforward
simplifications, the equation $\frac{\delta \Lambda}{\delta m}(\mu)= 0$ gives the
following differential equation
\begin{equation}\label{3.14}
\omega_1-\frac{\omega_2}{\mu^2}-\frac{2\mu_{\xi\xi}}{\mu^5}+\frac{5\mu_\xi^2}{\mu^6}-\frac{3}{4\mu^4}=0.
\end{equation}
 %Note that if $\mu$ is a weak solution to \eqref{3.13} then we have $\mu\in C^{\infty}(\mathbb{R})$,
 We want to show that the above differential equation \eqref{3.14} is satisfied whenever $\mu$ satisfies
the profile equation \eqref{3.1} and \eqref{3.3}, only if the Lagrange multipliers $\omega_1$ and $\omega_2$ are chosen as stated in \eqref{3.13}.

Using \eqref{3.1} and \eqref{3.3} we have
\begin{equation}\label{3.15}
\mu=\phi-\phi_{\xi\xi}=\frac{a}{\phi_\xi^2-\phi^2+c}, \ \ \phi^2-\phi_\xi^2-c=-\sqrt{c^2+E-4a\phi},
\end{equation}
since $\phi^2-\phi_\xi^2-c$ is negative (see \eqref{3.9}).

Thus, a direct calculation shows that
\begin{equation}\label{3.16}
 \mu_\xi=\frac{2a\phi_\xi\mu}{(\phi^2-\phi_\xi^2-c)^2}=2a^{-1}\phi_\xi\mu^3,
\end{equation}
and
\begin{equation}\label{3.17}
\mu_{\xi\xi}=2a^{-1}\phi_{\xi\xi}\mu^3+12a^{-2}\phi_\xi^2\mu^5.
\end{equation}
Substituting \eqref{3.15}, \eqref{3.16} and \eqref{3.17} into \eqref{3.14} yields
\eqnn{
 \omega_1-a^{-2}\omega_2(c^2+E-4a\phi)+2a^{-3}(c^2+E)\phi+4a^{-2}c-\frac{3}{4}a^{-4}(c^2+E)^2=0.
}
By grouping the constant terms and $\phi$ separately, this immediately gives the unique choice
for Lagrange multipliers
\begin{equation}\label{3.18}
 \omega_1=-4a^{-2}c+\frac{1}{4}a^{-4}(c^2+E)^2, \quad \omega_2=-\frac{1}{2}a^{-2}(c^2+E).
\end{equation}
Finally, substituting \eqref{3.4} into \eqref{3.18} gives \eqref{3.13}.
\end{proof}

Our next goal is to characterize the property of the critical point
$\mu$ as a local minimum, maximum or a saddle point. To this end, note that $\mu$ is uniformly bounded below by $k > 0$ by Remark \ref{re3.3},
we have $\mu+\widetilde{m}\in X_k$ if $\|\widetilde{m}\|_{H^1(\mathbb{R})}$ is sufficiently small.

The following result describes the second-order variation
of the action functional $\Lambda$. In what follows, $<.,.>$ denotes the
standard inner product in $L^2(\mathbb{R})$ space, and $\omega_1, \omega_2$ are selected by Lemma \ref{le3.2}.

\begin{corollary} \label{cor3.1}
 There exists a sufficiently small $\varepsilon>0$ such that for every $\widetilde{m}\in H^1(\mathbb{R})$ satisfying $\|\widetilde{m}\|_{H^1(\mathbb{R})}\leq \varepsilon$, we have
 \begin{equation}\label{3.19}
 \Lambda(\mu+\widetilde{m})-\Lambda(\mu)=\frac{1}{2}<\mathcal{L}\widetilde{m},\widetilde{m}>+R(\widetilde{m}),
\end{equation}
where $\mathcal{L}:=\frac{\delta^2 \Lambda}{\delta m^2}$ is given by
 \begin{equation}\label{3.20}
 \mathcal{L}=-\partial_\xi\mu^{-5}\partial_\xi+5\mu_{\xi\xi}\mu^{-6}-15\mu_\xi^2\mu^{-7}+\frac{3}{2}\mu^{-5}-\frac{c+3k^2}{2k^2(c-k^2)}\mu^{-3},
\end{equation}
and $\|R(\widetilde{m})\|_{H^1}\leq C_0\|\widetilde{m}\|^3_{H^1}$ for a positive constant $C_0$ independent of $\widetilde{m}$.
\end{corollary}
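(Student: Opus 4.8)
The plan is to read \eqref{3.19} as the second-order Taylor expansion of the action functional $\Lambda=F_3+\omega_1F_1+\omega_2F_2$ about the critical point $\mu$. Since each $F_j$ is the integral of a local density in $m$ and $m_\xi$, I would expand every density pointwise in the perturbation $\widetilde{m}$ (and $\widetilde{m}_\xi$) and then integrate. The zeroth-order term reproduces $\Lambda(\mu)$. The first-order term is exactly $\langle\tfrac{\delta\Lambda}{\delta m}(\mu),\widetilde{m}\rangle$, and this vanishes identically: Lemma \ref{le3.2} guarantees that $\mu$ is a critical point of $\Lambda$ for the choice of $\omega_1,\omega_2$ in \eqref{3.12}, so $\tfrac{\delta\Lambda}{\delta m}(\mu)=0$. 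This is the step that kills the linear contribution and leaves a purely quadratic leading term; everything of cubic order and higher is absorbed into $R(\widetilde{m})$.

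For the quadratic part I would use $(\mu+\widetilde{m})^{-n}=\mu^{-n}-n\mu^{-n-1}\widetilde{m}+\tfrac{n(n+1)}{2}\mu^{-n-2}\widetilde{m}^2+O(\widetilde{m}^3)$, with $n=1$ for $F_2$ and $n=3,5$ for the two pieces of $F_3$. The only delicate density is the gradient term $m_\xi^2/m^5$ in $F_3$, whose second-order contribution is $\mu^{-5}\widetilde{m}_\xi^2-10\mu^{-6}\mu_\xi\,\widetilde{m}\widetilde{m}_\xi+15\mu^{-7}\mu_\xi^2\,\widetilde{m}^2$. Writing $\widetilde{m}\widetilde{m}_\xi=\tfrac12(\widetilde{m}^2)_\xi$ and integrating by parts converts the cross term into multiplication operators, producing precisely the coefficients $5\mu_{\xi\xi}\mu^{-6}-15\mu_\xi^2\mu^{-7}$, while the $\widetilde{m}_\xi^2$ term becomes $\langle-\partial_\xi\mu^{-5}\partial_\xi\widetilde{m},\widetilde{m}\rangle$. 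Assembling these with the $\tfrac14m^{-3}$ piece of $F_3$ (which supplies the $\tfrac32\mu^{-5}$ potential) and the $\omega_2F_2$ term (which supplies $\omega_2\mu^{-3}=-\tfrac{c+3k^2}{2k^2(c-k^2)}\mu^{-3}$) yields the self-adjoint operator $\mathcal{L}$ of \eqref{3.20} and hence the quadratic form $\tfrac12\langle\mathcal{L}\widetilde{m},\widetilde{m}\rangle$.

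The main work, and the step I expect to be the principal obstacle, is the remainder estimate $|R(\widetilde{m})|\le C_0\|\widetilde{m}\|_{H^1}^3$. The subtle point is that the densities of $F_2$ and $F_3$ involve negative powers of $m$, so I must first guarantee that $m=\mu+\widetilde{m}$ stays uniformly bounded away from $0$. This follows for $\|\widetilde{m}\|_{H^1}\le\varepsilon$ small by combining the uniform lower bound $\mu>k>0$ from Remark \ref{re3.3} with the Sobolev embedding $H^1(\mathbb{R})\hookrightarrow L^\infty(\mathbb{R})$: one gets $\|\widetilde{m}\|_{L^\infty}\le C\|\widetilde{m}\|_{H^1}\le C\varepsilon$, hence $m\ge k/2$ for $\varepsilon$ small. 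On this set all negative powers of $m$ and all $\mu$-dependent Taylor coefficients are bounded, using also that $\mu,\mu_\xi,\mu_{\xi\xi}$ are bounded (indeed $\mu\to k$ as $|\xi|\to\infty$ with $\mu\in C^\infty$ by Lemma \ref{le3.1}).

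With these bounds in hand I would write each Taylor remainder in integral (Lagrange) form. Schematically, the cubic-and-higher terms are integrals of bounded coefficients against monomials of degree at least three in $\widetilde{m}$ and $\widetilde{m}_\xi$, such as $\widetilde{m}^3$, $\widetilde{m}^2\widetilde{m}_\xi$, and $\widetilde{m}\,\widetilde{m}_\xi^2$ arising from the $m_\xi^2/m^5$ density. Each is controlled by placing one factor in $L^\infty$ and the rest in $L^2$; for example $\int|\widetilde{m}|\,|\widetilde{m}_\xi|^2\,d\xi\le\|\widetilde{m}\|_{L^\infty}\|\widetilde{m}_\xi\|_{L^2}^2\le C\|\widetilde{m}\|_{H^1}^3$, and likewise $\int|\widetilde{m}|^3\le\|\widetilde{m}\|_{L^\infty}\|\widetilde{m}\|_{L^2}^2\le C\|\widetilde{m}\|_{H^1}^3$. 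Summing the finitely many such contributions from the three functionals gives the claimed cubic bound with a constant $C_0$ depending only on $c,k$ through the bounds on $\mu$ but independent of $\widetilde{m}$. The analogous and simpler estimates for the non-gradient densities of $F_2$ and the $\tfrac14m^{-3}$ part of $F_3$ complete the proof of \eqref{3.19}.
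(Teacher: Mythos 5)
Your proposal is correct and takes essentially the same route as the paper's own (much terser) proof: a Taylor expansion of $\Lambda$ about the critical point $\mu$ (with the linear term vanishing by Lemma \ref{le3.2}), identification of the quadratic part with the Sturm--Liouville form of $\mathcal{L}$ after integrating the cross term $\widetilde{m}\widetilde{m}_\xi$ by parts, and a cubic bound on the remainder using the uniform positivity of $\mu+\widetilde{m}$ together with the embedding $H^1(\mathbb{R})\hookrightarrow L^\infty(\mathbb{R})$, which is the same mechanism the paper invokes as the Banach-algebra property of $H^1(\mathbb{R})$. The details you supply beyond the paper's proof (criticality killing the first-order term, the pointwise lower bound $m\ge k/2$, and the H\"older estimates on the cubic monomials) are precisely what the paper leaves implicit, so there is no methodological difference.
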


\begin{proof}
The expression for $\mathcal{L}$ can be obtained by using the straightforward Taylor expansion of $\Lambda(\mu+\widetilde{m})-\Lambda(\mu)$.
Since $\mu$ are strictly positive, bounded, and smooth on $\mathbb{R}$ by Lemma \ref{3.1} and Remark \ref{3.3}, we conclude that all the coefficients of the Sturm-Liouville operator $\mathcal{L}$ are smooth and bounded.

On the other hand, $R(\widetilde{m})$ can be computed by using the Taylor expansion of
the functionals $F_2(m)$ and $F_3(m)$ at $m =\mu$. Note that the leading order
term in $R(\widetilde{m})$ is cubic and the Sobolev space $H^1(\mathbb{R})$ forms a
Banach algebra with respect to multiplication, so $\|R(\widetilde{m})\|_{H^1}\leq C_0\|\widetilde{m}\|^3_{H^1}$
 follows from the Taylor expansion of $F_2(m)$ and $F_3(m)$ and the smallness of $\|\widetilde{m}\|_{H^1}$.
\end{proof}

\subsection {Spectral properties of $\mathcal{L}$}

The goal of this subsection is to show that the Hessian
operator $\mathcal{L}$ of the action functional $\Lambda$ defined by \eqref{3.19} and
\eqref{3.20} has exactly one simple negative eigenvalue and a simple zero
eigenvalue isolated from the rest of the spectrum.

Note that $c+\phi_\xi^2-\phi^2>0$, making the Liouville substitution
\eq{
z=\int_{0}^{\xi}\frac{1}{(c+\phi_x^2-\phi^2)^{\frac{5}{2}}}dx, \quad \omega(z)=(c+\phi_\xi^2-\phi^2)^{\frac{5}{4}}\upsilon(\xi),
}{TransfSL}
transforms the spectral equation $\mathcal{L}\upsilon=\lambda\upsilon$ into
 \begin{equation}\label{3.21}
 \mathcal{A}\omega(z)=\left(-a^{-5}\partial_z^2+\frac{3}{2}\mu^{-5}-\frac{c+3k^2}{2k^2(c-k^2)}\mu^{-3}+q(z)\right)\omega(z)=\lambda \omega(z),
\end{equation}
where
$$
q(z)=5\mu_{\xi\xi}\mu^{-6}-15\mu_\xi^2\mu^{-7}+\frac{5}{2}a^{-1}\mu^{-3}\phi_{\xi\xi}-\frac{35}{4}a^{-2}\mu^{-1}\phi_\xi.
$$
The transformation defined in \eqref{TransfSL} does not alter the spectrum. This is a consequence of the fact that the expression $c + \phi_\xi^2 - \phi^2$ limits to $c - k^2 > 0$ as $|\xi| \to \infty$ and, by \eqref{3.10}, it is strictly positive. Thus, the expression multiplying $\upsilon(x)$ in \eqref{TransfSL} is uniformly bounded away from zero, and the spectra of $\mathcal{A}$ and $\mathcal{L}$ are the same.

Both $\mathcal{L}$ and $\mathcal{A}$ are defined in $H^2(\mathbb{R})\subset H^1(\mathbb{R})$ dense in $L^2(\mathbb{R})$ and it can be verified that all the coefficients of both $\mathcal{L}$ and $\mathcal{A}$ are smooth and bounded. Thus they can be extended as unbounded self-adjoint Sturm-Liouville operators in $L^2(\mathbb{R})$. The following proposition gives the spectral properties of the linear
operator $\mathcal{L}$.

\begin{proposition} \label{pro3.1}
(1) The spectrum is real, $\sigma(\mathcal{L})\subset \mathbb{R}$;

(2) 0 is a simple isolated eigenvalue of $\mathcal{L}$ with $\mu_\xi$ as its eigenfunction;

(3) There exists $\delta> 0$ such that $\sigma(\mathcal{L})$ in $(-\infty, \delta)$ consists
of a simple zero eigenvalue and a simple negative eigenvalue;

(4) The essential spectrum is positive and given by $\sigma_{ess}(\mathcal{L})=\left[\frac{c-3k^2}{k^5(c-k^2)}, +\infty\right)$.

\end{proposition}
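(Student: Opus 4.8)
The plan is to treat all four claims with the standard theory of Sturm--Liouville operators on the line, taking advantage of the fact that the Liouville substitution \eqref{TransfSL} has already converted $\mathcal{L}$ into the Schr\"odinger-type operator $\mathcal{A}$ in \eqref{3.21}, which has constant leading coefficient $-a^{-5}$ together with a smooth, bounded, spectrum-preserving potential. Claim (1) is then immediate from the self-adjointness established just before the proposition. I would dispatch claim (4) next, since it produces the spectral gap needed in the remaining parts. By Weyl's theorem the essential spectrum is invariant under the relatively compact perturbation obtained by subtracting the asymptotic operator: as $|\xi|\to\infty$ the homoclinic orbit of Lemma \ref{le3.1} forces $\mu\to k$, $\mu_\xi\to0$, $\mu_{\xi\xi}\to0$ at an exponential rate, so every coefficient of $\mathcal{A}$ containing a derivative of $\mu$ or $\phi$ (in particular all of $q(z)$) decays to zero while the remaining potential tends to
\[
\beta:=\frac{3}{2}k^{-5}-\frac{c+3k^2}{2k^2(c-k^2)}k^{-3}=\frac{c-3k^2}{k^5(c-k^2)}.
\]
Since $-a^{-5}\partial_z^2$ has essential spectrum $[0,\infty)$, the limiting operator $-a^{-5}\partial_z^2+\beta$ has essential spectrum $[\beta,\infty)$, and this is $\sigma_{ess}(\mathcal{L})$. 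Positivity of $\beta$ follows from $k^2<c/3$, which is built into the admissible range $k^2\in(c/9,c/3)$ of Lemma \ref{le3.1}.

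For claim (2) the essential point is translation invariance. Differentiating the Euler--Lagrange identity \eqref{3.13} with respect to $\xi$ and comparing with \eqref{3.20}, one finds that $\mathcal{L}\mu_\xi=\tfrac12\tfrac{d}{d\xi}(\text{E--L equation})=0$, so $\mu_\xi$ is an eigenfunction at eigenvalue $0$; its membership in $L^2(\mathbb{R})$ follows from the exponential decay of $\mu_\xi$. Simplicity is automatic for a second-order operator on the line below the essential spectrum, because two linearly independent $L^2$-eigenfunctions would have a constant nonzero Wronskian that must nevertheless vanish at infinity. Isolation of $0$ then follows from the gap $(0,\beta)$ furnished by claim (4) together with the discreteness of the spectrum below $\beta$.

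Claim (3) is where Sturm oscillation theory is used. The profile $\mu$ of Lemma \ref{le3.1} is single-humped: through the relation \eqref{7} it inherits from the homoclinic orbit \eqref{3.9} a strictly monotone increase from $k$ up to its unique maximum and a strictly monotone decrease back to $k$, so after fixing the translation $\mu_\xi$ vanishes at exactly one point, where it changes sign. Sturm's theorem then identifies $\mu_\xi$, with its single interior zero, as the \emph{second} eigenfunction of $\mathcal{L}$; consequently there is exactly one eigenvalue strictly below $0$---the nodeless ground state, which is simple---and $0$ is the next, also simple, eigenvalue. Choosing any $\delta\in(0,\beta)$ then gives $\sigma(\mathcal{L})\cap(-\infty,\delta)=\{\text{one simple negative eigenvalue},\,0\}$, as asserted.

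The step I expect to be most delicate is the rigorous nodal count underpinning claim (3): one must verify that $\mu_\xi$ has \emph{exactly} one zero and that the oscillation theorem---classically stated for regular problems on a bounded interval---applies to the present singular whole-line problem. The strict monotonicity of $\mu$ on each side of its crest should be read off from the phase portrait \eqref{3.9}, where the homoclinic loop meets the $\phi$-axis only at $(k,0)$ and $(\phi_1,0)$, so that $\phi$---and hence $\mu$ via \eqref{7}---has no interior critical point other than the crest; transporting this conclusion cleanly through the Liouville variables and justifying the whole-line oscillation count is the part requiring the most care.
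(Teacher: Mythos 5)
Your proposal is correct and follows essentially the same route as the paper's proof: self-adjointness of the Sturm--Liouville operator for (1), Weyl's theorem applied to the exponentially-asymptotic constant-coefficient operator $\mathcal{A}_\infty$ for (4) (your limiting constant $\tfrac{3}{2}k^{-5}-\tfrac{c+3k^2}{2k^2(c-k^2)}k^{-3}=\tfrac{c-3k^2}{k^5(c-k^2)}$ matches the paper's), translation invariance giving $\mathcal{L}\mu_\xi=0$ for (2), and Sturm oscillation theory with the single-zero count of $\mu_\xi$ (which the paper reads off from $\mu_\xi=2a^{-1}\phi_\xi\mu^3$, equivalent to your monotonicity argument via \eqref{7}) for (3). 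Your added details---the explicit Wronskian argument for simplicity and the flagged need to justify whole-line oscillation theory---are refinements of, not departures from, the paper's argument.
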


\begin{proof}
The linear operator $\mathcal{L}$ defined by \eqref{3.20} belongs to the
class of self-adjoint Sturm-Liouville operators in $L^2(\mathbb{R})$ with the
dense domain $H^2(\mathbb{R})$. Thus, $\sigma(\mathcal{L})\subset \mathbb{R}$. Moreover, the spectrum of the self-adjoint operator $\mathcal{L}$ on $\mathbb{R}\backslash\sigma_{ess}(\mathcal{L})$ contains only isolated simple eigenvalues.

By Sturm's Oscillation Theorem, it follows from the eigenvalue problem \eqref{3.21} that the point spectrum of $\mathcal{L}$ and $\mathcal{A}$ are the same.

Since $\mu(\xi)\rightarrow k$ as $|\xi|\rightarrow \infty$ exponentially fast, Weyl's Lemma states that the essential spectrum of $\mathcal{A}$ is given by the essential
spectrum of the linear operator with constant coefficients $\mathcal{A}_{\infty}$ given by
$$
\mathcal{A}_{\infty}=-a^{-5}\partial_{\xi\xi}+\frac{c-3k^2}{k^5(c-k^2)}.
$$
Since $c>3k^2$ (see Theorem \ref{th1.1}), the essential spectrum of $\mathcal{A}_{\infty}$ is $[\frac{c-3k^2}{k^5(c-k^2)},+\infty)=\sigma_{ess}(\mathcal{A})=\sigma_{ess}(\mathcal{L})$ which is strictly positive.

On the other hand, due to the translation symmetry of the mCH equation \eqref{1}, $\mu_\xi\in H^2(\mathbb{R})$ belongs
to the kernel of $\mathcal{L}$ so that 0 is in the spectrum of $\mathcal{L}$. Sturm's
Oscillation Theorem states that the $n$th simple eigenvalue corresponds
to the eigenfunction with $(n-1)$ simple zeros on $\mathbb{R}$. Since
$\mu_\xi=2a^{-1}\phi_\xi\mu^3$ has only one zero on $\mathbb{R}$, $0$ is the second eigenvalue of
$\mathcal{L}$ and there exists only one simple negative eigenvalue. (see for example \cite[Chapter 5]{TG12} for an exposition on
Sturm-Liouville Theory).
\end{proof}

\section{Orbital stability analysis}
\label{4s}
In this section, we first derive a stability criterion for the orbital stability of the solitary
wave $\mu$ with respect to perturbations in $H^1(\mathbb{R})$. Then we give an analytic proof that this stability criterion is satisfied.

Note that under the spectral properties of Proposition \ref{pro3.1}, Corollary \ref{cor3.1} implies that the solitary
wave $\mu$ is a degenerate saddle point of the action functional $\Lambda$.
The solitary waves can also be considered as
constrained critical points of $F_3$ subject to fixed $F_1$ and $F_2$ by the definition of $\Lambda$ in \eqref{3.12}. In this section, we first show that
$\mu$ is a constrained local minimizer of $\Lambda$, which implies stability by the result of \cite{GSS87}.

To understand precisely the appropriate constraint, notice that the action functional $\Lambda$ given in \eqref{3.12}, with Lagrange multipliers specified by {Lemma \ref{le3.2},} is written as a sum of three functionals. However, each of the functionals is not individually Fr\'echet differentiable in the sense that their derivatives taken with respect to the $\LT$ bracket are not in $\LT$ since
\eq{
\frac{\delta F_1}{\delta m}(\mu)=1, \quad \frac{\delta F_2}{\delta m}(\mu)=-\frac{1}{\mu^2}, \quad \frac{\delta F_3}{\delta m}(\mu)=-\frac{2\mu_{\xi\xi}}{\mu^5}+\frac{5\mu_\xi^2}{\mu^6}-\frac{3}{4\mu^4}.
}{FrD}
However, we can use the expressions above in \eqref{FrD} to rewrite $\Lambda(m)$  as the following sum of two Fr\'echet differentiable functionals
 \eq{
 {\Lambda(m)=\mathcal{G}(m)-\frac{c+3k^2}{2k^2(c-k^2)}{\mathcal{F}}(m),}
}{OmFr}
where
\eq{
\mathcal{G}(m):=F_3(m)+\frac{3}{4k^4}F_1(m),\;\;\mathcal{F}(m):=F_2(m)+k^{-2}F_1(m),
}{4.1}
with $F_i,\;i=1,2,3$ given in {\eqref{5}, \eqref{6}, and \eqref{7}.} {{We compute the Fr\'echet derivatives
\eq{
\frac{\delta \mathcal{G}}{\delta m}(\mu)=\frac{3}{4k^4}-\frac{2\mu_{\xi\xi}}{\mu^5}+\frac{5\mu_\xi^2}{\mu^6}-\frac{3}{4\mu^4},\quad \frac{\delta \mathcal{F}}{\delta m}(\mu)=\frac{1}{k^2}-\frac{1}{\mu^2}.
}{Frver}
Because the three limits, $\mu\to k$ and $\mu_\xi, \mu_{\xi\xi}\to 0$, are approached exponentially fast as $\xi\to\pm \infty$, we have that both expressions in \eqref{Frver} decay exponentially fast to zero. It follows that the two Fr\'echet derivatives are in $\LT$.}}

So now, the solitary waves can now be considered as
constrained critical points of $\mathcal{G}$ subject to fixed $\mathcal{F}$.
Since $\mathcal{F}$ is conserved, it follows that the evolution of
\eqref{1} does not occur on all of $X_k$  but rather on the co-dimension one submanifold
$$\mathcal{M}=\{m\in X_k: \mathcal{F}(m)=\mathcal{F}(\mu)\}.$$
We define
\eq{\mathcal{T}\triangleq\left\{\widetilde{m}\in H^1(\mathbb{R}):  \left<\frac{\delta \mathcal{F}}{\delta m}(\mu),\widetilde{m}\right>=0\right\}.}{Taudef}
The set $\mathcal{T}$ is the tangent space in $H^1(\mathbb{R})$ to the submanifold $\mathcal{M}$ at the point $\mu$.
It also  represents the set of perturbations that satisfy the first-order linear condition for the nonlinear constraint $\mathcal{F}(\mu+\widetilde{m})=\mathcal{F}(\mu)$ to be satisfied.

Now we add one constraint aimed at ``removing'' the kernel and define the constraint space as
%$$
%\mathcal{S}\triangleq \left\{\widetilde{m}\in H^1(\mathbb{R}): \langle \widetilde{m},\mu_\xi\rangle=0,\mathcal{F}(\mu+\widetilde{m})=\mathcal{F}(\mu)\right\},
%$$
%with linear approximation
\eq{
\mathcal{S}_L\triangleq \left\{\widetilde{m}\in H^1(\mathbb{R}): \left\langle \widetilde{m},\mu_\xi\right\rangle{{=}}\left\langle\frac{\delta \mathcal{F}}{\delta m}(\mu),\widetilde{m}\right\rangle=0\right\}.
}{SLdef}
Under the spectral properties of $\mathcal{L}$, as a consequence of \cite{VK73}, the operator $\mathcal{L}$ is strictly
positive definite in $\mathcal{S}_L$ if and only if
\begin{equation}\label{4.3}
 \left\langle \mathcal{L}^{-1}\frac{\delta \mathcal{F}}{\delta m}(\mu),\frac{\delta \mathcal{F}}{\delta m}(\mu)\right\rangle<0.
\end{equation}
This thus implies that the solitary wave with profile $\mu$ is a local constrained
 minimizer of $\Lambda$ under the constraints defining $\mathcal{S}_L$ in \eqref{SLdef}.
%\begin{equation}\label{4.4}
% \left\langle\frac{\delta \mathcal{F}}{\delta m}(\mu),\widetilde{m}\right\rangle=0
%\end{equation}

\begin{remark} \label{re4.1}
The condition \eqref{4.3} guaranteeing positive definiteness of $\mathcal{L}$ on $\mathcal{S}_L$ is referred
to as the so-called Vakhitov-Kolokolov condition. Later we will derive a analytical
representation for the inner-product in \eqref{4.3}. As a matter of fact, we will prove that the condition  \eqref{4.3} always holds for all smooth solitary waves $\mu$ constructed in Lemma \ref{le3.1}.
\end{remark}

\begin{remark} \label{re4.2}
By \eqref{3.11} and \eqref{Frver}, the constraint used to define the set $\mathcal{T}$ in \eqref{Taudef} can be equivalently written as
\eqnn{
 \langle \phi-k,\widetilde{m}\rangle=0,
}
where $\phi-k$ decays to zero at infinity exponentially
fast.
\end{remark}

\subsection {Analysis of the Vakhitov-Kolokolov condition}
\label{S:VK}
In this subsection, we seek to derive an analytic representation for the condition \eqref{4.3}.

{{We first briefly show that the solitary wave depends smoothly on the parameters $c$, $a$, and $E$, thus justifying the derivatives we are taking in this section.  From \eqref{3.3}, which is satisfied by the solitary wave, and using the reasoning that led to \eqref{3.9} {{where the positive branch of the square root was identified as the correct one for this solution,}} we obtain
\[
\psi = \phi_\xi = \phi^2 - c + \sqrt{c^2 + E - 4a\phi} = \phi^2 - c + \sqrt{(c - k^2)(c + 3k^2 - 4k\phi)}.
\]
Here, the second form follows from the computation leading to \eqref{3.9}, or equivalently by applying \eqref{3.4}.

Since Lemma~\ref{le3.1} ensures that \(k < \phi < \sqrt{c}\) and the maximum of \(\phi\) occurs when \(\psi = 0\), the expression under the square root is strictly positive. Consequently, the right-hand side of the differential equation defining \(\phi_\xi\) is a smooth function of \(\phi\) and the parameters \(c\), \(a\), and \(E\). This implies, by standard results on differential equations,  that \(\phi\) itself depends smoothly on these three parameters.}}

Recall that for the smooth solitary wave $u=\phi$, $\mu=\phi-\phi_{\xi\xi}$ is a critical point of $\Lambda$, that is,
$$
\frac{\delta \Lambda}{\delta m}(\mu)= \frac{\delta F_3}{\delta m}(\mu)+\omega_1\frac{\delta F_1}{\delta m}(\mu)+\omega_2\frac{\delta F_2}{\delta m}(\mu)=0.
$$
Differentiating this relation  with respect to the parameters $a, E$, and $c$, yields
\eq{
\mathcal{L}\mu_a=-\frac{\partial \omega_1}{\partial a}\frac{\delta F_1}{\delta m}(\mu)-\frac{\partial \omega_2}{\partial a}\frac{\delta F_2}{\delta m}(\mu),\\
\mathcal{L}\mu_E=-\frac{\partial \omega_1}{\partial E}\frac{\delta F_1}{\delta m}(\mu)-\frac{\partial \omega_2}{\partial E}\frac{\delta F_2}{\delta m}(\mu),\\
\mathcal{L}\mu_c=-\frac{\partial \omega_1}{\partial c}\frac{\delta F_1}{\delta m}(\mu)-\frac{\partial \omega_2}{\partial c}\frac{\delta F_2}{\delta m}(\mu).
}{4.6}

The following result is due to a scaling symmetry of the profile equation.

\begin{lemma} \label{le4.1}
 {{Let $u=\phi$ be the smooth traveling wave solution of the mCH equation \eqref{1}.}} Then $\mu=\phi-\phi_{\xi\xi}$ satisfies
\begin{equation}\label{4.7}
3a\mu_a+4E\mu_E+2c\mu_c=\mu.
\end{equation}
\end{lemma}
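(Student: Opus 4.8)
The plan is to exploit a one-parameter scaling symmetry of the profile equations \eqref{3.1} and \eqref{3.3} and then invoke Euler's identity for homogeneous functions. First I would check that the substitution $\phi(\xi)\mapsto\lambda\phi(\xi)$, with the independent variable $\xi$ left untouched, carries solutions of the profile equations to solutions, provided the three parameters are rescaled as $a\mapsto\lambda^3 a$, $E\mapsto\lambda^4 E$, and $c\mapsto\lambda^2 c$. This is a one-line substitution: in \eqref{3.1} the factor $\phi-\phi_{\xi\xi}$ scales like $\lambda$ while $\phi_\xi^2-\phi^2+c$ scales like $\lambda^2$ (which forces $c\mapsto\lambda^2 c$), so their product scales like $\lambda^3$, matching $a\mapsto\lambda^3 a$; in \eqref{3.3}, once $a$ and $c$ carry the weights just found, each term on the left scales like $\lambda^4$, matching $E\mapsto\lambda^4 E$. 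Since $\mu=\phi-\phi_{\xi\xi}$ and $\xi$ is unchanged, the momentum variable inherits the weight $\mu\mapsto\lambda\mu$.

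Next I would promote this symmetry to a homogeneity relation for $\mu$ as a function of the parameters. Viewing $\phi(\xi;a,E,c)$ as the (translation-normalized) profile solving \eqref{3.1} with first-integral value $E$, the scaling just described, together with uniqueness of the normalized profile for given parameters, yields
\begin{equation*}
\mu(\xi;\lambda^3 a,\lambda^4 E,\lambda^2 c)=\lambda\,\mu(\xi;a,E,c),\qquad \lambda>0.
\end{equation*}
The smoothness of $\phi$, and hence of $\mu$, in $(a,E,c)$ is precisely the hypothesis of the lemma, so I may differentiate this identity in $\lambda$. Applying $\tfrac{d}{d\lambda}$ and evaluating at $\lambda=1$ by the chain rule produces $3a\mu_a+4E\mu_E+2c\mu_c=\mu$, which is \eqref{4.7}.

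The step I expect to require the most care is the passage from the symmetry, which a priori only relates distinct members of the family and traces out a single scaling orbit, to a statement about the three independent partial derivatives $\mu_a$, $\mu_E$, $\mu_c$. The resolution is to regard $\mu$ as a genuine function of three independent arguments $(a,E,c)$ on an open set (the general traveling-wave family governed by \eqref{3.1}--\eqref{3.3}, of which the solitary waves form a subfamily), so that the displayed homogeneity relation constrains the simultaneous rescaling of all three arguments and Euler's identity returns exactly the weighted combination appearing in \eqref{4.7}. One should also confirm that the rescaling preserves the admissibility window $k\in(\frac{\sqrt c}{3},\frac{\sqrt{3c}}{3})$: under it $k\mapsto\lambda k$ while the ratio $k/\sqrt c$ is invariant, so the scaled profile is again the smooth solitary wave of Lemma \ref{le3.1}, and the homogeneity relation is consistent with the parameter relations \eqref{3.4}.
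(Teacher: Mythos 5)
Your proposal is correct and is essentially the paper's own argument: the paper encodes the same scaling symmetry by writing $\phi=c^{1/2}\psi$ with $a=c^{3/2}\mathbb{A}$, $E=c^{2}\mathbb{E}$ ($\psi,\mathbb{A},\mathbb{E}$ independent of $c$) and differentiating in $c$, which is exactly your homogeneity relation with $\lambda^2=c/c_0$ differentiated along the scaling orbit. Your Euler-identity phrasing, the explicit weight check in \eqref{3.1} and \eqref{3.3}, and the remark that the window $k\in(\tfrac{\sqrt c}{3},\tfrac{\sqrt{3c}}{3})$ is scale-invariant are just slightly more careful renderings of the same proof.
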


\begin{proof}
Note that the solution $\phi(x;a,E,c)$ of the the profile equations \eqref{3.1} and \eqref{3.3} enjoy the scaling symmetry
$$
\phi(\xi;a,E,c)=c^{\frac{1}{2}}\psi(\xi;\mathbb{A},\mathbb{E},c), \quad a=c^{\frac{3}{2}} \mathbb{A},\ \  E=c^2  \mathbb{E}
$$
where $\psi$ and the constants $\mathbb{A}$ and $\mathbb{E}$ are independent of $c$. Differentiating with respect to $c$ gives
$$
\frac{\partial \phi}{\partial a}\frac{\partial a}{\partial c}+\frac{\partial \phi}{\partial E}\frac{\partial E}{\partial c}+\frac{\partial \phi}{\partial c}=\frac{1}{2\sqrt{c}}\psi=\frac{1}{2c}\phi.
$$
Since
$$
\frac{\partial a}{\partial c}=\frac{3}{2}c^{\frac{1}{2}}\mathbb{A}=\frac{3a}{2c}, \quad \frac{\partial E}{\partial c}=2c\mathbb{E}=\frac{2E}{c}
$$
it follows that
$$
3a\frac{\partial \phi}{\partial a}+4E\frac{\partial \phi}{\partial E}+2c\frac{\partial \phi}{\partial c}=\phi.
$$
Noting that $\mu=\phi-\phi_{\xi\xi}$, it follows that the same expression holds for $\mu$, as claimed.
\end{proof}

By using the identities \eqref{4.6} and \eqref{4.7}, we have
\begin{eqnarray*}
\mathcal{L}\mu &=& 3a\mathcal{L}\mu_a+4E\mathcal{L}\mu_E+2c\mathcal{L}\mu_c\\
&=& -\left(3a\frac{\partial \omega_1}{\partial a}+4E\frac{\partial \omega_1}{\partial E}+2c\frac{\partial \omega_1}{\partial c}\right)\frac{\delta F_1}{\delta m}(\mu)-\left(3a\frac{\partial \omega_2}{\partial a}+4E\frac{\partial \omega_2}{\partial E}+2c\frac{\partial \omega_2}{\partial c}\right)\frac{\delta F_2}{\delta m}(\mu).
\end{eqnarray*}
Furthermore, differentiating the equation $\frac{\delta \Lambda}{\delta m}(\mu)=0$ with respect to $k$ yields
$$
\mathcal{L}\mu_k=-\frac{\partial \omega_1}{\partial k}\frac{\delta F_1}{\delta m}(\mu)-\frac{\partial \omega_2}{\partial k}\frac{\delta F_2}{\delta m}(\mu).
$$
Since $\mu(x)\rightarrow k$ as $|x|\rightarrow \infty$, then $\mu_k(x)\rightarrow 1$
as $|x|\rightarrow \infty$, so that $\mu_k(x)$ does not decay to $0$ at infinity.
However, $k\mu_k(x)-\mu(x)$ does decay to $0$ at infinity, so we have
 \begin{eqnarray*}\label{11}
 \mathcal{L}(k\mu_k-\mu) &=&  \left(-k\frac{\partial \omega_1}{\partial k}+3a\frac{\partial \omega_1}{\partial a}+4E\frac{\partial \omega_1}{\partial E}+2c\frac{\partial \omega_1}{\partial c}\right)\frac{\delta F_1}{\delta m}(\mu) \nonumber\\
& & +\left(-k\frac{\partial \omega_2}{\partial k}+3a\frac{\partial \omega_2}{\partial a}+4E\frac{\partial \omega_2}{\partial E}+2c\frac{\partial \omega_2}{\partial c}\right)\frac{\delta F_2}{\delta m}(\mu).
\end{eqnarray*}
By using \eqref{3.2}, \eqref{3.13} and \eqref{3.18}, we get
 \begin{eqnarray*}
 \mathcal{L}(k\mu_k-\mu) &=&  \frac{4c}{k^2(c-k^2)^2}\frac{\delta F_1}{\delta m}(\mu)+\frac{4c}{(c-k^2)^2}\frac{\delta F_2}{\delta m}(\mu)\\
&=&   \frac{4c}{(c-k^2)^2}\left[\frac{1}{k^2}\frac{\delta F_1}{\delta m}(\mu)+\frac{\delta F_2}{\delta m}(\mu)\right]\\
&=& \frac{4c}{(c-k^2)^2}\frac{\delta \mathcal{F}}{\delta m}(\mu).
\end{eqnarray*}

{{
{From the preceding equation, it follows that }
${\delta \mathcal{F}}/{\delta m}$ is in the range of  $\mathcal{L}$.
Moreover, since Lemma~\ref{le3.1} implies that $\mu$ is an even function of $\xi$,
the quantity $k\mu_k - \mu$ is even and thus lies in the orthogonal complement of the one-dimensional kernel of $\mathcal{L}$,
which is generated by the odd function $\mu_\xi$.
Given the spectral properties of $\mathcal{L}$ described in Proposition~\ref{pro3.1},
its inverse exists if the domain of $\mathcal{L}$ is restricted on this subspace.
Therefore, we conclude that}}
$$
\mathcal{L}^{-1}\frac{\delta \mathcal{F}}{\delta m}(\mu)=\frac{(c-k^2)^2}{4c}(k\mu_k-\mu)
$$
and then the condition \eqref{4.3} can be rewritten as
\begin{equation}\label{4.8}
 \left\langle \mathcal{L}^{-1}\frac{\delta \mathcal{F}}{\delta m}(\mu),\frac{\delta \mathcal{F}}{\delta m}(\mu)\right\rangle=\frac{(c-k^2)^2}{4c}\int_{\mathbb{R}}\frac{\delta \mathcal{F}}{\delta m}(\mu)(k\mu_k-\mu)d\xi.
\end{equation}

With this representation, we can establish the following stability criterion.

\begin{lemma} \label{le4.2}
The Vakhitov-Kolokolov condition \eqref{4.3} holds if and only if
\begin{equation}\label{4.9}
\frac{d}{dk}Q(\phi,k)<0,\quad Q(\phi,k)=\int_{\mathbb{R}}\left(\sqrt{\frac{c-k^2}{c+3k^2-4k\phi}}+\sqrt{\frac{c+3k^2-4k\phi}{c-k^2}}-2\right)d\xi,
\end{equation}
where $\phi=\phi(\xi)$ is the solitary wave solution obtained in Lemma \ref{le3.1} which relates $\mu$ through \eqref{3.11}.
\end{lemma}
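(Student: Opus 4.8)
The plan is to reduce the Vakhitov--Kolokolov condition \eqref{4.3} to the explicit representation \eqref{4.8} already in hand, and then to recognize the integral appearing there as $\frac{d}{dk}Q(\phi,k)$. In \eqref{4.8} the left-hand side of \eqref{4.3} equals $\frac{(c-k^2)^2}{4c}\int_{\mathbb{R}}\frac{\delta \mathcal{F}}{\delta m}(\mu)(k\mu_k-\mu)\,d\xi$. Since $c>0$ and $c-k^2>0$ (indeed $c>3k^2$ throughout the admissible range), the prefactor $\frac{(c-k^2)^2}{4c}$ is strictly positive, so \eqref{4.3} holds precisely when $\int_{\mathbb{R}}\frac{\delta \mathcal{F}}{\delta m}(\mu)(k\mu_k-\mu)\,d\xi<0$. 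Everything then comes down to identifying this integral with $\frac{d}{dk}Q(\phi,k)$.

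The crucial step is to rewrite $Q$ using the profile relation \eqref{7}, $\mu=k\sqrt{\frac{c-k^2}{c+3k^2-4k\phi}}$. From this I read off $\frac{\mu}{k}=\sqrt{\frac{c-k^2}{c+3k^2-4k\phi}}$ and $\frac{k}{\mu}=\sqrt{\frac{c+3k^2-4k\phi}{c-k^2}}$, so that the functional in \eqref{4.9} collapses to the compact form $Q(\phi,k)=\int_{\mathbb{R}}\big(\frac{\mu}{k}+\frac{k}{\mu}-2\big)\,d\xi$. This rewriting is what exposes the correct algebraic structure.

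Next I would differentiate under the integral sign, treating $\mu=\mu(\xi;k)$ with $c$ fixed. A direct computation of $\frac{d}{dk}\big(\frac{\mu}{k}+\frac{k}{\mu}-2\big)$, collecting the terms with and without $\mu_k$, gives exactly $\big(\frac{1}{k^2}-\frac{1}{\mu^2}\big)(k\mu_k-\mu)$. Recalling from \eqref{Frver} that $\frac{\delta \mathcal{F}}{\delta m}(\mu)=\frac{1}{k^2}-\frac{1}{\mu^2}$, the integrand of $\frac{dQ}{dk}$ coincides verbatim with the integrand in \eqref{4.8}. Hence $\frac{dQ}{dk}=\int_{\mathbb{R}}\frac{\delta \mathcal{F}}{\delta m}(\mu)(k\mu_k-\mu)\,d\xi$, and together with the positivity of the prefactor this gives the claimed equivalence: \eqref{4.3} holds if and only if $\frac{d}{dk}Q(\phi,k)<0$.

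The only point requiring care, rather than any genuine obstacle, is the justification of differentiation under the integral sign. Both $\frac{\mu}{k}+\frac{k}{\mu}-2$ and its $k$-derivative vanish as $|\xi|\to\infty$, since $\mu\to k$ exponentially fast and correspondingly $k\mu_k-\mu\to 0$ (as already noted in Section \ref{4s}); this exponential decay, combined with the smoothness and uniform boundedness of $\mu$ from Lemma \ref{le3.1} and Remark \ref{re3.3}, supplies the integrable dominating bound needed to interchange $\frac{d}{dk}$ with $\int_{\mathbb{R}}$. The real content of the proof is the elementary identity relating the integrand of $\frac{dQ}{dk}$ to $\frac{\delta \mathcal{F}}{\delta m}(\mu)(k\mu_k-\mu)$.
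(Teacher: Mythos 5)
Your proof is correct, and its skeleton matches the paper's: both reduce \eqref{4.3} to the sign of $\int_{\mathbb{R}}\frac{\delta\mathcal{F}}{\delta m}(\mu)(k\mu_k-\mu)\,d\xi$ via \eqref{4.8} and the strictly positive prefactor $\frac{(c-k^2)^2}{4c}$, and both identify that integral with $\frac{d}{dk}Q(\phi,k)$ by recognizing, through \eqref{7}, that $Q(\phi,k)=\int_{\mathbb{R}}\left(\frac{\mu}{k}+\frac{k}{\mu}-2\right)d\xi$. The difference is in how that identity is established. The paper works at the level of the conserved functionals: it computes $\frac{d}{dk}\mathcal{F}(\mu)$, evaluates $\int_{\mathbb{R}}\frac{\delta\mathcal{F}}{\delta m}(\mu)\,\mu\,d\xi=-F_2(\mu)+k^{-2}F_1(\mu)$, assembles these into $\int_{\mathbb{R}}\frac{\delta\mathcal{F}}{\delta m}(\mu)(k\mu_k-\mu)\,d\xi=\frac{d}{dk}\left[k\mathcal{F}(\mu)\right]$ (its \eqref{4.10}), and only then observes $k\mathcal{F}(\mu)=Q(\phi,k)$ (its \eqref{4.11}). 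You replace those two functional computations by the single pointwise identity $\frac{d}{dk}\left(\frac{\mu}{k}+\frac{k}{\mu}-2\right)=\left(\frac{1}{k^2}-\frac{1}{\mu^2}\right)(k\mu_k-\mu)$, which by \eqref{Frver} is exactly the integrand appearing in \eqref{4.8}; integrating in $\xi$ then yields the identity in one stroke. Your route is more elementary and exposes the mechanism plainly (the lemma is nothing more than $Q=k\mathcal{F}(\mu)$ plus differentiation in $k$), whereas the paper's bookkeeping keeps the Casimir combination $\mathcal{F}$ in the foreground. A further point in your favor: you explicitly justify the interchange of $\frac{d}{dk}$ with $\int_{\mathbb{R}}$ using the exponential decay of $\mu-k$ and of $k\mu_k-\mu$; the paper's derivation of \eqref{4.10} requires exactly the same interchange but passes over it silently.
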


\begin{proof}
In view of \eqref{4.1}, we have
$$
\frac{d}{dk}\mathcal{F}(\mu)=\int_{\mathbb{R}}\frac{\delta \mathcal{F} }{\delta m}(\mu)\mu_kd\xi-2k^{-3}F_1(\mu)
$$
and this leads to
$$
\int_{\mathbb{R}}\frac{\delta \mathcal{F}}{\delta m}(\mu)(k\mu_k-\mu)d\xi=k\frac{d}{dk}\mathcal{F}(\mu)+2k^{-2}F_1(\mu)-\int_{\mathbb{R}}\frac{\delta \mathcal{F}}{\delta m}(\mu)\mu d\xi.
$$
On the other hand, by using \eqref{5} and \eqref{6}, we get
\begin{eqnarray*}
\int_{\mathbb{R}}\frac{\delta \mathcal{F}}{\delta m}(\mu)\mu d\xi &=&\int_{\mathbb{R}}\left[\frac{\delta F_2}{\delta m}(\mu)+k^{-2}\frac{\delta F_1}{\delta m}(\mu)\right]\mu d\xi\\
&=& \int_{\mathbb{R}}(-\mu^{-2}+k^{-2})\mu d\xi\\
&=& \int_{\mathbb{R}}\left[-(\mu^{-1}-k^{-1})+k^{-2}(\mu-k)\right]d\xi\\
&=& -F_2(\mu)+k^{-2}F_1(\mu).
\end{eqnarray*}
Thus, it follows that
\begin{eqnarray}\label{4.10}
\int_{\mathbb{R}}\frac{\delta \mathcal{F}}{\delta m}(\mu)(k\mu_k-\mu) d\xi &=& k\frac{d}{dk}\mathcal{F}(\mu)+F_2(\mu)+k^{-2}F_1(\mu)\nonumber\\
&=&  k\frac{d}{dk}\mathcal{F}(\mu)+\mathcal{F}(\mu)\nonumber\\
&=&  \frac{d}{dk}[k\mathcal{F}(\mu)],
\end{eqnarray}
where we used the definition of $\mathcal{F}$ given in \eqref{4.1}.
Making use of \eqref{5}, \eqref{6} and \eqref{3.11}, we have
\begin{eqnarray}\label{4.11}
\frac{d}{dk}[k\mathcal{F}(\mu)] &=& \frac{d}{dk} [kF_2(\mu)+k^{-1}F_1(\mu)]\nonumber\\
&=&  \frac{d}{dk} \int_{\mathbb{R}}\left(\frac{k}{\mu}+\frac{\mu}{k}-2\right)d\xi\nonumber\\
&=&  \frac{d}{dk} \int_{\mathbb{R}}\left(\sqrt{\frac{c-k^2}{c+3k^2-4k\phi}}+\sqrt{\frac{c+3k^2-4k\phi}{c-k^2}}-2\right)d\xi\nonumber\\
&= & \frac{d}{dk}Q(\phi,k).
\end{eqnarray}
Thus, the statement of Lemma 4.2 follows from \eqref{4.8}, \eqref{4.10} and \eqref{4.11}.
\end{proof}

\subsection {Verification of the stability criterion \eqref{4.9}}
In this subsection, we give a proof that the stability criterion \eqref{4.9} is satisfied. To this end, we introduce a change of variables that simplifies the first-order system \eqref{3.2} and then, as a consequence, the stability criterion.

Firstly, we rescale the smooth solitary wave solutions $\phi$ as
\begin{equation}\label{4.12}
 \phi=k+\beta \varphi, \quad \beta:=\frac{c-k^2}{4k},
\end{equation}
where $\beta\in(\frac{\sqrt{3c}}{6},\frac{2\sqrt{c}}{3})$ due to $k\in(\frac{\sqrt{c}}{3},\frac{\sqrt{3c}}{3})$. By Lemma \ref{le3.1}, we have that
\eq{\varphi\in (0,\varphi_0]\subset(0,1),\;\;\varphi_0:=\sup_{\xi\in\mathbb{R}}{(\varphi)}=\frac{4k\left(\sqrt{2(c-k^2)}-2k\right)}{c-k^2},}{p0def}
 and
it is easily seen that $\varphi_0\rightarrow 1$ as $k\rightarrow \frac{\sqrt{c}}{3}$. Note that the fact that $\varphi_0<1$ for $k\in(\frac{\sqrt{c}}{3},\frac{\sqrt{3c}}{3})$ follows from the fact that
$$
\varphi_0<1\iff \left(\sqrt{c-k^2}-2\sqrt{2}k\right)^2>0.
$$
Substituting \eqref{4.12} into the first-order system \eqref{3.2}, and using the expression giving $a$ in \eqref{3.4}, yields
\begin{equation}\left\{\begin{array}{l}\label{4.13}
\displaystyle{\varphi_\xi=\widetilde{\psi},} \\ \\
 \displaystyle{\widetilde{\psi}_\xi=\varphi+\frac{k\beta(\varphi^2-\widetilde{\psi}^2)+2k^2 \varphi}{k^2-c+\beta^2(\varphi^2-\widetilde{\psi}^2)+2k\beta \varphi},}
\end{array}\right.\end{equation}\\
which has the following first integral
\begin{equation}\label{4.14}
 \widetilde{H}(\varphi,\widetilde{\psi}):=\left[\beta^2(\varphi^2-\widetilde{\psi}^2)+2k\beta \varphi+k^2-c\right]^2+(c-k^2)^2\varphi-(c-2k^2)^2.
\end{equation}
The integral above can be obtained (up to the addition of a constant) from the first integral \eqref{3.3} using the change of variables \eqref{4.12}, or it can be obtained directly from System \eqref{4.13} by integration.
System \eqref{4.13} possesses two equilibria: the saddle point $(0, 0)$ and the center $(\frac{\sqrt{4c-3k^2}-3k}{2\beta}, 0)$, where $0<\frac{\sqrt{4c-3k^2}-3k}{2\beta}<1$.
Note that along the homoclinic orbit of \eqref{4.13},  the Hamiltonian energy is given by $ \widetilde{H}(\varphi,\widetilde{\psi})= \widetilde{H}(0,0)\triangleq \widetilde{H}_0=k^2(2c-3k^2)$.

Next, we write the equation for the level set of the Hamiltonian function \eqref{4.14}
with the energy $\widetilde{H}_0=k^2(2c-3k^2)$ as
$$
\beta^2(\varphi^2-\widetilde{\psi}^2)+\frac{c-k^2}{2}\varphi+k^2-c+(c-k^2)\sqrt{1-\varphi}=0,
$$
which can, with the help of \eqref{4.12}, be rewritten  as
\begin{equation}\label{4.15}
 \frac{\varphi^2-\widetilde{\psi}^2}{2-\varphi-2\sqrt{1-\varphi}}=\frac{8k^2}{c-k^2}.
\end{equation}

\begin{lemma} \label{le4.4}
The criterion \eqref{4.9} always holds for $\varphi\in(0,1)$.
\end{lemma}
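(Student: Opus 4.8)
The plan is to show that the quantity $Q(\phi,k)$ in \eqref{4.9}, although defined through the $k$-dependent profile $\phi$, collapses to a function of the single parameter $h:=\frac{8k^2}{c-k^2}$, and then to evaluate it in closed form. First I would combine the relation \eqref{7} with the rescaling \eqref{4.12}, $\phi=k+\beta\varphi$. Since $c+3k^2-4k\phi=(c-k^2)(1-\varphi)$, the two radicals reduce to $\frac{\mu}{k}=\frac{1}{\sqrt{1-\varphi}}$ and $\frac{k}{\mu}=\sqrt{1-\varphi}$, so that the integrand of $Q$ becomes $\frac{1}{\sqrt{1-\varphi}}+\sqrt{1-\varphi}-2=\frac{(1-\sqrt{1-\varphi})^2}{\sqrt{1-\varphi}}$, a manifestly nonnegative expression depending on $\varphi$ alone.

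Next I would convert the spatial integral into a phase-plane integral along the homoclinic orbit. Using $\varphi_\xi=\widetilde{\psi}$ together with the level-set relation \eqref{4.15}, which on the orbit gives $\widetilde{\psi}^2=\varphi^2-h(2-\varphi-2\sqrt{1-\varphi})$, I would set $d\xi=d\varphi/\widetilde{\psi}$ and use the symmetry of the profile to write $Q=2\int_0^{\varphi_0}\frac{(1-\sqrt{1-\varphi})^2}{\sqrt{1-\varphi}\,\widetilde{\psi}}\,d\varphi$. The substitution $s=\sqrt{1-\varphi}$ factors the radical as $\widetilde{\psi}=(1-s)\sqrt{(1+s)^2-h}$, and the turning point $\varphi_0=2\sqrt{h}-h$ (see \eqref{p0def}) corresponds to $s_0=\sqrt{h}-1$; after the further shift $u=1+s$ this collapses to
\[
Q=4\int_{\sqrt{h}}^{2}\frac{2-u}{\sqrt{u^2-h}}\,du,
\]
which is a function of $h$ only. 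This reduction is the conceptual heart of the argument: the criterion \eqref{4.9} becomes a statement about one elementary single-variable integral.

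Finally I would evaluate this integral explicitly as $Q(h)=8\ln\frac{2+\sqrt{4-h}}{\sqrt{h}}-4\sqrt{4-h}$ and differentiate to obtain $\frac{dQ}{dh}=-\frac{2\sqrt{4-h}}{h}<0$ for $h\in(1,4)$. Because $h=\frac{8k^2}{c-k^2}$ maps the admissible interval $k\in(\frac{\sqrt{c}}{3},\frac{\sqrt{3c}}{3})$ onto $(1,4)$ and is strictly increasing, with $\frac{dh}{dk}=\frac{16kc}{(c-k^2)^2}>0$, the chain rule yields $\frac{d}{dk}Q(\phi,k)=\frac{dQ}{dh}\frac{dh}{dk}<0$, which is precisely \eqref{4.9}. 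I expect the only real obstacle to be technical rather than conceptual: the integrand $\frac{2-u}{\sqrt{u^2-h}}$ has an integrable singularity at the lower endpoint $u=\sqrt{h}$, corresponding to the turning point of the orbit, so some care is needed to justify the change of variables and the convergence of the integral. Differentiating under the integral sign would be delicate because of this moving singular endpoint, which is exactly why I would first evaluate $Q(h)$ in closed form and only then differentiate.
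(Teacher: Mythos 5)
Your proposal is correct and follows essentially the same route as the paper: reduce the integrand to $\frac{(1-\sqrt{1-\varphi})^2}{\sqrt{1-\varphi}}$ via \eqref{7} and \eqref{4.12}, convert to a phase-plane integral using \eqref{4.15}, substitute $t=\sqrt{1-\varphi}$, evaluate in closed form as $8\ln\frac{2+\sqrt{4-h}}{\sqrt{h}}-4\sqrt{4-h}$, and differentiate. The only (cosmetic) difference is that you differentiate in the single variable $h$ and invoke the chain rule with $\frac{dh}{dk}>0$, whereas the paper rewrites $Q$ in terms of $c,k$ and differentiates in $k$ directly; both yield the same negative derivative.
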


\begin{proof}
It follows from \eqref{4.15} that
\eqnn{
\varphi_{\xi}^2=\varphi^2-h(1-\sqrt{1-\varphi})^2,
}
where $h=\frac{8k^2}{c-k^2}$ and $\varphi\in(0,1)$. Note that for $\xi\in(-\infty,0)$, $\varphi_\xi>0$, so we have
 \begin{equation}\label{4.28}
 \varphi_{\xi}=\sqrt{\varphi^2-h\left(1-\sqrt{1-\varphi}\right)^2},\quad \xi\in(-\infty,0).
\end{equation}

 Inserting \eqref{4.12} and \eqref{4.28} into the expression for $Q$ in \eqref{4.9}, and taking advantage of the evenness of $\varphi$, we then have
\begin{eqnarray}\label{4.29}
Q(\phi,k) &=&  \int_{\mathbb{R}}\left(\sqrt{\frac{c-k^2}{c+3k^2-4k\phi}}+\sqrt{\frac{c+3k^2-4k\phi}{c-k^2}}-2\right)d\xi\nonumber\\
&=&   \int_{\mathbb{R}}\left( \frac{1}{\sqrt{1-\varphi}}+\sqrt{1-\varphi}-2\right)d\xi\nonumber\\
&=&   2\int_{-\infty}^{0}\frac{(1-\sqrt{1-\varphi})^2}{\sqrt{1-\varphi}}d\xi\nonumber\\
&=&   2\int_{0}^{\varphi_0}\frac{(1-\sqrt{1-\varphi})^2}{\sqrt{1-\varphi}}\cdot \frac{d\varphi}{\sqrt{\varphi^2-h\left(1-\sqrt{1-\varphi}\right)^2}}.
\end{eqnarray}
Making the change of variable
$$ t\triangleq \sqrt{1-\varphi}, \quad \varphi=1-t^2,\quad d\varphi=-2tdt,$$
with $h=\frac{8k^2}{c-k^2}$ and  $\varphi_0$ given in \eqref{p0def}, the expression of $Q$ in \eqref{4.29} becomes
 \begin{eqnarray*}
Q(\phi,k) &=&  2\int_{1}^{\sqrt{1-\varphi_0}}\left[\frac{(1-t)^2}{t}\cdot \frac{-2t}{\sqrt{(1-t^2)^2-h(1-t)^2}}\right]dt\nonumber\\
&=&   4\int_{\sqrt{1-\varphi_0}}^{1}\left(\frac{1-t}{\sqrt{(1+t)^2-h}}\right)dt\nonumber\\
&=&   \left(8\ln(t+1+\sqrt{(t+1)^2-h})-4\sqrt{(t+1)^2-h}\right)\Bigg|_{t=\sqrt{1-\varphi_0}}^{t=1}\nonumber\\
&=&   8\ln\left(\frac{2+\sqrt{4-h}}{1+\sqrt{1-\varphi_0}}\right)-4\sqrt{4-h}\nonumber\\
&=&  8\ln\left(\frac{\sqrt{c-k^2}+\sqrt{c-3k^2}}{\sqrt{2}k}\right)-8\sqrt{\frac{c-3k^2}{c-k^2}}.
\end{eqnarray*}
A straightforward calculation yields that
$$
\frac{d}{dk}Q(\phi,k)=\frac{-8c}{k(c-k^2)}\sqrt{\frac{c-3k^2}{c-k^2}}<0.
$$
This completes the proof of the stability criterion \eqref{4.9}.
\end{proof}

\subsection {Proof of Theorem 1.1}

It follows from lemmas \ref{le4.2} and \ref{le4.4} that the Vakhitov-Kolokolov condition \eqref{4.3} holds.
As mentioned before, because of the spectral properties of $\mathcal{L}$ stated in Proposition \ref{pro3.1},
it follows from \cite{VK73} that $\mathcal{L}$ is positive definite in the subspace $\mathcal{S}_L$ of $\HT$
defined in \eqref{SLdef}. That is, for all $\widetilde{m}\in {\mathcal{S}}_L\subset H^1(\mathbb{R})$,
we have \eq{ \left\langle\mathcal{L}\widetilde{m},\widetilde{m}\right\rangle\geq C\|\widetilde{m}\|^2_{H^1(\mathbb{R})}
\text{ for some }C>0. }{PD} Our main result in Theorem \ref{th1.1} is derived from the orbital stability theory presented
in \cite[Section 3]{GSS87}. This result is a consequence of the variational characterization of the traveling wave given
in Lemma \ref{le3.2}, the expression for $\Lambda$ given in \eqref{OmFr} in terms of two Fr\'echet differentiable functionals,
the positive-definiteness of $\mathcal{L}$ on $\mathcal{S}_L$ as stated in \eqref{PD}, and the local well-posedness theory for
the mCH equation (\ref{1}) in $X_k$ as shown in Proposition {\ref{pro.1}.} One of the assumptions in \cite{GSS87} is that
the $\mathcal{J}$ defined in \eqref{HM} is onto. However, this condition is not used in the proof of stability
in \cite[Section 3]{GSS87}. The condition on the operator $\mathcal{J}$ is only employed in \cite{GSS87} to prove instability.
The reader is also encouraged to see the proof of \cite[Theorem 4.5]{EJL24}, in which the orbital stability is proven,
without the use of the Hamiltonian operator, in the context of the Novikov equation under the same conditions listed above
for our case.

 \qed

\noindent{\bf Acknowledgements}

This research was partially by the NSFC (No.12571172) and by the
Scientific Research Fund of Hunan Provincial Education Department
(No.21A0414), {{and Doctoral Research Fund of Hubei University of Automotive Technology (No.BK202434)}}. The research of S. Lafortune was supported by a Collaboration Grants for Mathematicians from the Simons Foundation (award \# 420847). The research of Z. Liu was supported by the NSFC (No.12571188) and the Fundamental Research Funds for the Central
Universities, China University of Geosciences (Nos.CUGST2), and Guangdong Basic and Applied Basic Research
Foundation (Nos.2023A1515011679; 2024A1515012704). One of the authors (S.L.) is grateful for helpful email discussions with Teng Long from the Huazhong University of Science and Technology.

\appendix
 \section {Conservation of the functional {\blue {$F_2$ and $F_3$}}}
 \label{AA}
{{In this Appendix, we show explicitly that the quantity {\blue {$F_2$ and $F_3$}} given in \eqref{6} and \eqref{7} is time-independent for the mCH equation \eqref{1}. We first do this by a direct computation and then we show that they both are Casimirs for the Hamiltonian operator $\mathcal{J}$ in \eqref{HM}.

% \subsection {Proof that the functionals $F_2$ and $F_3$ are conserved}
% \label{2.2s}
 First, we do this by a direct computation.
It follows from \eqref{1} that
\begin{equation}\label{2.2}
m_t=-[(u^2-u_x^2)m_x+2m^2u_x], \ \  m_{tx}=-[(u^2-u_x^2)m_{xx}+6u_xmm_x+2m^2(u-m)],
\end{equation}

A straightforward calculation shows that
\begin{eqnarray*}
\frac{d}{dt}F_2(m(t))&=& \frac{d}{dt}\int_{\mathbb{R}}\left(\frac{1}{m}-\frac{1}{k}\right)dx \\
&=& \int_{\mathbb{R}} \left[2u_x+\frac{(u^2-u_x^2)m_x}{m^2}\right]dx  \\
&=& -\int_{\mathbb{R}} (u^2-u_x^2)\left(\frac{1}{m}\right)_xdx\\
&=& \int_{\mathbb{R}} \frac{(u^2-u_x^2)_x}{m}dx\\
&=& \int_{\mathbb{R}}2u_xdx=0.
\end{eqnarray*}

Similarly, we have
\begin{eqnarray*}
\frac{d}{dt}F_3(m(t))&=& \frac{d}{dt}\int_{\mathbb{R}}\frac{m_x^2}{m^5}dx+\frac{d}{dt}\int_{\mathbb{R}}\frac{1}{4m^3}dx \\
&=& I_1+I_2.
\end{eqnarray*}
In view of \eqref{2.2}, and integrating by parts several times leads to
\begin{eqnarray*}
I_1&=&  \frac{d}{dt}\int_{\mathbb{R}}\frac{m_x^2}{m^5}dx\\
&=& \int_{\mathbb{R}} \left(\frac{2m_xm_{xt}}{m^5}-\frac{5m_x^2m_t}{m^6}\right)dx\\
&=& \int_{\mathbb{R}} \left(\frac{-2m_x[(u^2-u_x^2)m_{xx}+6mm_xu_x+2m^2(u-m)]}{m^5}+\frac{5m_x^2[(u^2-u_x^2)m_x+2m^2u_x]}{m^6}\right)dx\\
&=& \int_{\mathbb{R}} \left(\left[\frac{-m_x^2(u^2-u_x^2)}{m^5}\right]_x-\frac{4m_x(u-m)}{m^3}\right)dx\\
&=&  -\int_{\mathbb{R}} \left.\frac{4m_x(u-m)}{m^3}\right.dx,
\end{eqnarray*}
and
\begin{eqnarray*}
I_2&=&  \frac{d}{dt}\int_{\mathbb{R}}\frac{1}{4m^3}dx\\
&=& \int_{\mathbb{R}} \frac{3}{4m^4}[(u^2-u_x^2)m]_xdx\\
&=& \int_{\mathbb{R}}\left( \frac{3}{4}\left[\frac{u^2-u_x^2}{m^3}\right]_x+\frac{3(u^2-u_x^2)m_x}{m^4}\right)dx\\
&=& -\int_{\mathbb{R}}  (u^2-u_x^2)\left(\frac{1}{m^3}\right)_xdx\\
&=&  \int_{\mathbb{R}} \left(\frac{(u^2-u_x^2)_x}{m^3}-\left[\frac{u^2-u_x^2}{m^3}\right]_x\right)dx\\
&=&  \int_{\mathbb{R}} \left(\frac{2u_x}{m^2}\right)dx.
\end{eqnarray*}

Thus, we obtain
\begin{eqnarray*}
I_1+I_2&=& \int_{\mathbb{R}}\left( -\frac{4m_x(u-m)}{m^3}+\frac{2u_x}{m^2}\right)dx\\
&=& \int_{\mathbb{R}} \left( \frac{-4um_x}{m^3}-4\left(\frac{1}{m}\right)_x+\frac{2u_x}{m^2}\right)dx\\
&=& \int_{\mathbb{R}} \left( \frac{-4um_x}{m^3}+\frac{2u_x}{m^2}\right)dx\\
&=& \int_{\mathbb{R}}   \left(\frac{2u}{m^2}\right)_xdx=0.
\end{eqnarray*}
Consequently, it yields that $\frac{d}{dt}F_3(m(t))=0$.

As a second method for proving that $F_2$ and $F_3$ are conserved, we establish that they are Casimirs of the Hamiltonian operator $\mathcal{J}$ in~\eqref{HM}. The computation parallels the approach used in \cite[Appendix~A]{LP22} for identifying conserved quantities in the $b$-family.

Assume that $\mathcal{J}f = 0$, where $\mathcal{J}$ is given in~\eqref{HM}. Then
\eq{
\mathcal{J} f = \partial_x m \,\partial_x^{-1} m \, (\partial_x^2 - 1)^{-1} \, \partial_x m \,\partial_x^{-1} m \, \partial_x f = 0.
}{CasCond}

{{We use the representation from \cite[(2.4) and the two following equations]{Olver2016}, namely
\eq{
\mathcal{J} = \mathcal{K} \mathcal{B}^{-1} \mathcal{K}, \qquad
\mathcal{K} \equiv \partial_x m \,\partial_x^{-1} m \,\partial_x,\;\;
\mathcal{B} \equiv  \partial_x^3-\partial_x.
}{Jform}

To determine $f$, we use the fact that $F_2$ was previously identified as a Casimir of $\mathcal{K}$ in \cite[Equation~(2.6) and the discussion after (3.5)]{Olver2016} and \cite[discussion after (26)]{OR96}, i.e.
$$
\mathcal{K}\left(\frac{\delta F_2}{\delta m}\right)=0.
$$

From the representation of $\mathcal{J}$ in~\eqref{Jform}, it follows that $F_2$ is therefore a Casimir of $\mathcal{J}$. To construct another Casimir, we set
\eq{
(\mathcal{B}^{-1}\mathcal{K})f
= (\partial_x^2 - 1)^{-1} m\,\partial_x^{-1} m\,\partial_x f
= -\frac{\delta F_2}{\delta m}
= \frac{1}{m^2}.
}{CasCond2}

From~\eqref{CasCond2}, one computes directly that
\eqnn{
\partial_x f
= m^{-1}\,\partial_x m^{-1}(\partial_x^2 - 1)\!\left(\frac{1}{m^2}\right)=-\left(\left(\frac{2m_x}{m^5}\right)_x+\frac{5m_x^3}{m^6}+\frac{3m}{4m^4}\right)_x
= \left(\frac{\delta F_3}{\delta m}\right)_x.
}

Consequently,
\eqnn{
f = \frac{\delta F_3}{\delta m}
}
satisfies~\eqref{CasCond}. Thus, $F_2$ and $F_3$ are Casimirs of the operator $\mathcal{J}$ and therefore are conserved quantities.

Note that we avoided the issue of choosing the correct constant of integration when applying $\partial_x^{-1}$ by relying on \cite{Olver2016, OR96}, where it is established that $F_2$ is a Casimir of $\mathcal{K}$.}} This choice can alternatively be enforced by selecting appropriate integration limits when defining the operators, together with corresponding boundary conditions; see, for example, \cite[Equation~(2.2)]{C1}, which discusses Casimirs for nonlocal Hamiltonian operators, and also \cite[Section~2]{C2}. Additional discussion on nonlocal operators and symmetries appears in \cite{C3, C4}. {{As in other peakon-type equations, the inverse operator { {$(1 - \partial_x^2)^{-1}f$ }}is defined by convolution: $(e^{-|x|}/2) * f$.}}


\begin{thebibliography}{10}



\bibitem{CH93} R.~Camassa and D.~Holm, {\sl{An integrable shallow water equation with peaked solitons}}, Phys.~Rev.~Lett. \textbf{71} (1993) 1661--1664.

\bibitem{CS00} A. Constantin and W. Strauss, {\sl{Stability of peakons}}, Comm. Pure Appl. Math. \textbf{53} (2000) 603--610.

\bibitem{CS02} A. Constantin and W. Strauss, {\sl{Stability of the Camassa-Holm solitons}}, J. Nonlinear Sci. \textbf{12} (2002) 415--422.

\bibitem{dhh} A.~Degasperis, D. D.~Holm and A. N. W.~Hone,
{\sl{A new integrable equation with peakon solutions}},
Theor. and Math. Phys. {\bf 133} (2002) 1461--1472.

\bibitem{dp} A. Degasperis and  M. Procesi, {\sl{Symmetry and Perturbation Theory}}, in {\em Asymptotic Integrability} (A. Degasperis and G. Gaeta, editors)
(World Scientific Publishing, Singapore, 1999), pp. 23--37.

\bibitem{Dullin} H. R. Dullin, G. A. Gottwald, and D. D. Holm, {\sl{An integrable shallow water equation with linear and nonlinear
dispersion}}, Phys. Rev. Lett. {\bf 87} (2001) 194501.

\bibitem{EJL24} B. Ehrman, M. Johnson, and S. Lafortune, {\sl{Orbital stability of smooth solitary waves for the Novikov equation}}, J. Nonlinear Sci. \textbf{34} (2024) 10098.

\bibitem{FO95} A.S. Fokas, {\sl{The Korteweg-de Vries equation and beyond}}, Acta Appl. Math. \textbf{39} (1995) 295--305.

\bibitem{FU96} B. Fuchssteiner, {\sl{Some tricks from the symmetry-toolbox for nonlinear equations}}, Physica D \textbf{95} (1996) 229--243.

\bibitem{GL13} G. Gui, Y. Liu, P. Olver, and C.Z. Qu, {\sl{Wave breaking and peakons for a modified Camassa-Holm equation}}, Comm. Math. Phys. \textbf{319} (2013) 731--759.

\bibitem{GSS87} M. Grillakis, J. Shatah, and W. Strauss, {\sl{Stability theory of solitary waves in the presence of symmetry-I}}, J. Funct. Anal. \textbf{74} (1987) 160--197.

\bibitem{C2} D.D. Holm and N. W. Hone, {\sl{A class of equations with peakon and pulson solutions (with an Appendix by Harry Braden and John Byatt-Smith)}},  J. Nonlinear Math. Phys.~{\bf{12}} (2005) 380--394.

\bibitem{ecmethod} D. D. Holm, J.E. Marsden, T. Ratiu, and A. Weinstein, {\sl{Nonlinear stability of fluid and plasma equilibria}}, Physics Reports \textbf{123} (1985) 1--116.


\bibitem{HL13} A. Hone and S. Lafortune, {\sl{Stability of stationary solutions for nonintegrable peakon equations}}, Physica D \textbf{263} (2013) 1--14.

\bibitem{Hone08} A.N.W. Hone and J.P. Wang, {\sl{Integrable peakon equations with cubic nonlinearity}}, J. Phys. A \textbf{41} (2008) 372002.

\bibitem{IL12} I.Ivanov, T. Lyons, {\sl{Dark solitons of the Qiao's hierarchy}}, J. Math. Phys. {\bf{53}} (2012) 123701.

\bibitem{Olver2016} J. Kang, X. Liu, P. J. Olver, and C. Qu, {\sl{Liouville Correspondence Between the Modified KdV Hierarchy and Its Dual Integrable Hierarchy}}, J. Nonlinear Sci. \textbf{26} (2016) 141--170.

%\bibitem{KP13} T. Kapitula and K. Promislow, {\sl{Spectral and Dynamical Stability of Nonlinear Waves}}, Applied Mathematical Sciences 185, Springer, 2013.


\bibitem{KM20} B. Khorbatly and L. Molinet, {\sl{On the orbital stability of the Degasperis-Procesi antipeakon-peakon profile}}, J. Differ. Equ. \textbf{269} (2020) 4799--4852.

\bibitem{LP22} S. Lafortune and D.E. Pelinovsky, {\sl{Stability of smooth solitary waves in the b-Camassa-Holm equation}}, Physica D \textbf{440} (2022) 133477.


\bibitem{LL21} J. Li and Y. Liu, {\sl{Stability of solitary waves for the modified Camassa-Holm equation}}, Ann. PDE \textbf{7} (2021) 14.


\bibitem{LL24} {J. Li, C. Liu, T. Long, and J. Yang, {\sl{The stability of smooth solitary waves for the b-family of Camassa-Holm equations}}, Physica D \textbf{463} (2024) 134182.}


\bibitem{LLW20} J. Li, Y. Liu, and Q. Wu, {\sl{Spectral stability of smooth solitary waves for the Degasperis-Procesi equation}}, J. Math. Pures Appl. \textbf{142} (2020) 298--314.

\bibitem{LLW22} J. Li, Y. Liu, and Q. Wu, {\sl{Orbital stability of the sum of smooth solitons to the Degasperis-Procesi equation}}, J. Math. Pures Appl. \textbf{163} (2022) 204--231.

\bibitem{LLW23} J. Li, Y. Liu, and Q. Wu, {\sl{Orbital stability of smooth solitary waves for the Degasperis-Procesi equation}}, Proc. Am. Math. Soc. \textbf{1} (2023) 151--160.

\bibitem{LLZ24} J. Li, Y. Liu, and G. Zhu, {\sl{Orbital stability of smooth solitons for the modified Camassa-Holm equation}}, Adv. Math. \textbf{454} (2024) 109870.

\bibitem{LL09} Z. Lin and Y. Liu, {\sl{Stability of peakons for the Degasperis-Procesi equation}}, Commun. Pure Appl. Math. \textbf{62} (2009) 125--146.


\bibitem{LLQ14} X. Liu, Y. Liu, and C. Qu, {\sl{Orbital stability of the train of peakons for an integrable modified Camassa-Holm equation}}, Adv. Math. \textbf{255} (2014) 1--37.


\bibitem{LL23} {T. Long and C. Liu, {\sl{Orbital stability of smooth solitary waves for the b-family of Camassa-Holm equations}}, Physica D \textbf{446} (2023) 133680.}

\bibitem{C1} A.Y. Maltsev and S.P. Novikov, {\sl{On the local systems Hamiltonian in the weakly non-local Poisson brackets}}, Physica D \textbf{156} (2001) 53--80.

\bibitem{MMT02} Y. Martel, F. Merle, and T. Tsai, {\sl{Stability and asymptotic stability in the energy space of the sum of N solitons for subcritical gKdV equations}}, Comm. Math. Phys. \textbf{231} (2002) 347--373.

\bibitem{M13} Y. Matsuno, {\sl{B\"{a}cklund transformation and smooth multisoliton solutions for a modified Camassa-Holm equation with cubic nonlinearity}}, J. Math. Phys. \textbf{54} (2013) 051504.

\bibitem{M14} Y. Matsuno, {\sl{Smooth and singular multisoliton solutions of a modified Camassa-Holm equation with cubic nonlinearity and linear dispersion}}, J. Phys. A \textbf{47} (2014) 125203.

\bibitem{Novikov} V.~Novikov, {\sl{Generalizations of the Camassa-Holm equation}}, J.~Phys.~A, {\bf{42}} (2009), pp. 342002, 14.

\bibitem{C4} P.J. Olver, {\sl{Nonlocal symmetries and ghosts}}, In: A.B. Shabat, A. Gonz\'alez-L\'opez, M. Ma\~nas, L. Mart\'inez Alonso and M.A. Rodr\'iguez (eds.), {\sl{New Trends in Integrability and
Partial Solvability}}, 199--215. Kluwer, Dordrecht (2004)

\bibitem{C3} P.J. Olver, J. Sanders, J.P. Wang, {\sl{Ghost symmetries}}, J. Nonlinear Math. Phys. {\bf{9}} (2002) 164--172.


\bibitem{OR96} P.J. Olver and P. Rosenau, {\sl{Tri-Hamiltonian duality between solitons and solitary-wave solutions having compact support}}, Phys. Rev. E \textbf{53} (1996) 1900--1906.

\bibitem{Qiao06} Z. Qiao, {\sl{A new integrable equation with cuspons and w/m-shape-peaks solitons}}, J. Math. Phys. \textbf{47} (2006) 112701.

\bibitem{Qiao11} Z. Qiao and X. Li, {\sl{An integrable equation with nonsmooth solitons}}, Theoretical and Mathematical Physics \textbf{167} (2011) 584--589.

\bibitem{QLL13} C. Qu, X. Liu, and Y. Liu, {\sl{Stability of peakons for an integrable modified Camassa-Holm equation with cubic nonlinearity}}, Commun. Math. Phys. \textbf{322} (2013) 967--997.

\bibitem{TG12} Teschl, Gerald, {\sl{Ordinary Differential Equations and Dynamical Systems}},
Providence: American Mathematical Society (2012).

\bibitem{VK73} N. Vakhitov and A. Kolokolov, {\sl{Stationary solutions of the wave equation in a medium with nonlinearity saturation}}, Radiophys. Quantum Electron. \textbf{16} (1973) 783--789.

\bibitem{YFL22} Y. Yang, E. Fan, Y. Liu, {\sl{Existence of global solutions for the modified Camassa-Holm equation with a nonzero background}}, arXiv preprint
arXiv:2207.12711 (2022).

\end{thebibliography}
\end{document}